\newtheorem{theorem}{Theorem}
\newtheorem{definition}[theorem]{Definition}
\newtheorem{lemma}[theorem]{Lemma}
\newcommand{\C}{\mathbb C}
\newcommand{\R}{\mathbb R}
\newcommand{\N}{\mathbb N}
\newcommand{\D}{\mathbb D}
\newcommand{\T}{\mathbb T}
\newcommand{\B}{\mathbb B}
\title[Boundary value problem for several complex variables]{Asymptotic first boundary value problem for holomorphic functions  of several complex variables}
\dedicatory{}
\author[P. M. Gauthier]{Paul M. Gauthier}
\address[Paul M. Gauthier]{D\'epartement de math\'ematiques et de statistique, Universit\'e de Montr\'eal, Montr\'eal, Qu\'ebec, Canada H3C3J7.}
\email{gauthier@dms.umontreal.ca}
\author[M. Shirazi]{Mohammad Shirazi}
\address[Mohammad Shirazi]{Department of Mathematics and Statistics, McGill University, Montr\'eal, Qu\'ebec, Canada, H3A 0B9}
\email{mohammad.shirazi@mail.mcgill.ca}
\begin{document}

\begin{abstract}
In 1955, Lehto showed that, for every measurable function $\psi$ on the unit circle $\mathbb T,$ there is function $f$ holomorphic in the unit disc $\D,$ having $\psi$ as radial limit  a.e. on $\T.$ We consider an analogous boundary value problem, where the unit disc is replaced by a Stein domain on a complex manifold and radial approach to a boundary point $p$ is replaced by (asymptotically) total approach to $p.$ 
\end{abstract}

\thanks{The present research was supported by NSERC (Canada) grant RGPIN-2016-04107.}

\maketitle

\section{Introduction}
The  boundary behaviour of holomorphic functions is an important topic in complex analysis (see for example \cite{No}). Due to a close connection to harmonic analysis, the most investigated type of boundary behaviour has been  the behaviour of holomorphic functions along paths tending to the boundary, especially along radii in the unit disc and radial limits have also been applied to the design of filters in electrical engineering (see e.g. \cite{AV}).

In 1955, O. Lehto \cite{L} showed that,  if 
$\psi_1(\theta),\psi_2(\theta):(0, 2\pi]\to [-\infty,+\infty]$ are arbitrary measurable functions,
then there exists a function $f(z),$ holomorphic in in the unit disc in the complex plane $\C$, such that
$$
	\lim_{\rho \nearrow 1} f(\rho e^{i\theta}) = \psi_1(\theta)+i\psi_2(\theta), \quad \mbox{for} \quad a.e. \quad  \theta\in[0,2\pi). 
$$
Lehto's theorem shows that the radial boundary values of  holomorphic functions in $\D$ can be prescribed almost everywhere on the boundary of the disc. On the other hand, any attempt to prescribe angular boundary values fails dramatically due to the  Luzin-Privalov uniqueness theorem \cite{No}, which asserts that if a function $f$ meromorphic in the unit disc $\D$ has angular limit $0$ at each point of a subset of the boundary having positive linear measure, then $f= 0.$ In Lehto's theorem, the topology on $[-\infty, +\infty],$ and consequently the $\sigma$-algebra of Borel sets, are induced by the natural mapping $[-1,+1]\to [-\infty, +\infty], \, t\mapsto \tan (t\pi/2).$

We shall denote by $A^\circ$ the interior of a set $A,$ with respect to a topology which will usually be evident from the context. For $p\in\C^n$ and $r>0,$ denote by $\B(p,r)$ the open ball of center $p$ and radius $r.$ For simplicity, we write $\B=\B(0,1).$ Let
$m$ denote the Lebesgue $2n$-measure on $\C^n=\R^{2n}$ and $\nu$ the standard measure of mass $1$ on $\partial \B.$

Whether Lehto's theorem on radial limits can be extended to the open ball in $\C^n$ is an open problem. However,  by replacing radial limits by limits along large sets (which we shall describe precisely), we shall state a theorem on boundary values which is valid on very general domains.  A special case is the following result for the unit ball.

\begin{theorem}\label{ball}
Let $\psi$ be a 
Borel measurable function on the boundary $\partial \B$  of the unit ball $\B\subset\C^n,$ 
whose restriction to some closed subset $F_0 \subset\partial \B$ is continuous.  
Then,   
there exists a holomorphic function $f$ on $\B,$ a set $F\subset \partial \B,$ with $F_0 \subset F$ and $\nu(\partial \B\setminus F)=0$ such that, for every $p\in F$,   there is a set $\mathcal F_p\subset \B,$ such that  
$$
	f(x)\to \psi(p), \quad  \mbox{as} \quad  x\to p, \quad x\in \mathcal F_p
$$
and 
$$
	\lim_{r\searrow  0}
	\frac{m\big(\B(p, r)\cap\,\mathcal F_p\big)}{m\big(\B(p,r)\cap\B\big)}=1.
$$
\end{theorem}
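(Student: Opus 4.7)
The plan is to reduce Theorem~\ref{ball} to a tangential (Carleman-type) approximation on the Stein domain $\B$, applied to a suitable closed subset $E\subset\B$ that reaches every boundary point of full measure with density one.

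First I would regularize $\psi$. By Lusin's theorem applied to the inner-regular Borel measure $\nu$, one can find an increasing sequence of compact sets $F_0\subset F_1\subset F_2\subset\cdots$ in $\partial\B$ such that $\nu(\partial\B\setminus\bigcup_k F_k)=0$ and $\psi|_{F_k}$ is continuous for each $k$; set $F=\bigcup_k F_k$. Using Tietze, extend $\psi|_{F_k}$ to continuous functions $\tilde\psi_k$ on $\partial\B$.

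Next I would build the skeleton $E$ together with a continuous extension of $\psi$. Exhaust $\B$ by shells $S_k=\{x\in\B:2^{-k-1}<\mathrm{dist}(x,\partial\B)\le 2^{-k}\}$. In each shell remove a sparse family of tiny open holes to form a closed set $E\subset\B$ satisfying (a) the total volume of holes inside $\B(p,r)\cap\B$ is $o\bigl(m(\B(p,r)\cap\B)\bigr)$ as $r\searrow 0$ for every $p\in F$, so that $E$ has density one at each such $p$, and (b) $E$ remains admissible for tangential approximation in $\B$. On $E\cap S_k$ define a continuous function $g$ that, moving outward, interpolates between the value on the previous shell and $\tilde\psi_k$, in such a way that $g$ is continuous on $E$ and, for each $p\in F_k$, $g(x)\to\psi(p)$ as $x\to p$ in $E$. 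Then I would invoke a Carleman/Arakelian-type tangential approximation theorem on the Stein domain $\B$ (which is where the general Stein-domain version of the result stated elsewhere in the paper is meant to apply) to obtain a holomorphic $f$ on $\B$ with $|f(x)-g(x)|<\varepsilon(x)$ on $E$, for some positive $\varepsilon$ with $\varepsilon(x)\to 0$ as $x\to\partial\B$. Taking $\mathcal F_p=E$ (or its intersection with a neighborhood of $p$) then yields $f(x)\to\psi(p)$ as $x\to p$ in $\mathcal F_p$, while (a) supplies the required density condition.

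The main obstacle is the simultaneous demand that $E$ be \emph{fat enough} to have density one at every $p\in F$ yet \emph{tame enough} to be a set of tangential holomorphic approximation in $\B$. One must arrange the removed holes to be distributed densely enough in each shell to support a continuous extension of $\psi$ compatible with $\tilde\psi_k$, while also being sparse enough that the complement retains the holomorphic convexity properties required by the Carleman-style theorem. Once this geometric balancing is accomplished and the approximation error $\varepsilon(x)$ is chosen to decay sufficiently fast, the remainder of the proof is essentially bookkeeping: verifying continuity of the piecewise extension $g$, checking that $g(x)\to\psi(p)$ uniformly along $E$ as $x\to p$, and deducing the conclusion from the triangle inequality $|f-\psi(p)|\le|f-g|+|g-\psi(p)|$.
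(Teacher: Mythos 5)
There is a genuine gap, and it sits exactly at the point you yourself flag as ``the main obstacle'' and then leave unresolved: the existence of a Carleman/Arakelian-type tangential approximation theorem on $\B\subset\C^n$ for a closed set $E$ of the kind you describe. No such theorem is available in several complex variables --- Arakelian/Carleman approximation on closed sets reaching the boundary is essentially a one-variable phenomenon --- and appealing to ``the general Stein-domain version stated elsewhere in the paper'' is circular, since Theorem~\ref{ball} is deduced from Theorem~\ref{main}, whose proof consists precisely in manufacturing a usable substitute for the missing approximation theorem: a proper embedding $h$ of the domain into some $\C^N$ (so that the boundary is sent to infinity), a serpentine tiling of $\C^N$ by bricks, stratifications $\widetilde B_k$ that are polynomially convex by Kallin's separation lemma (Lemma~\ref{blocksPC}), the inductive polynomial approximation of Theorem~\ref{masonic approximation}, and the measure estimates of Lemmas~\ref{predensity}, \ref{9-12} and \ref{g-u}. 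Your Lusin/Tietze regularization of $\psi$ and the final triangle-inequality step do correspond to Lemma~\ref{continuous} and the proof of Theorem~\ref{main}, but the central approximation step is simply assumed.

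Moreover, the specific geometry you propose cannot be repaired: a shell $S_k$ from which only a sparse family of tiny holes has been removed is very far from being $\mathcal O(\B)$-convex. If the holes are sparse enough for $E$ to have density one, then for almost every complex line $L$ through the origin and infinitely many $k$, $E\cap S_k\cap L$ contains full circles centred at $0$; the $\mathcal O(\B)$-hull of such a circle is the disc it bounds, and a slice/maximum-principle argument shows that any $f\in\mathcal O(\B)$ with $|f-g|<\varepsilon(x)$ on $E$, $\varepsilon\to 0$ at $\partial\B$, would be bounded near $\partial\B$ and have radial limits equal to $\psi$ along a.e.\ radius. Taking $\psi$ continuous, real-valued and nonconstant already gives a contradiction (the Poisson representation forces $\operatorname{Im}f\equiv 0$, hence $f$ constant). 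So ``density one via tiny holes'' is incompatible with holomorphic approximability: the removed set, though of small measure, must be arranged as walls (in the paper, slabs around a locally finite family of real hyperplanes separating the shrunken bricks) that every such circle crosses. This is why the paper's approach sets $\mathcal F_p$ have only compact components, a feature the authors point out explicitly when they note that no radial or angular approach can be retained; your construction, by contrast, tries to keep connected pieces of $E$ running out to the boundary, which is exactly what the obstruction above rules out.
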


Although Theorem \ref{ball} resembles the theorem of Lehto, it is not strictly speaking a generalization of Lehto's result, for we are not claiming that the set of approach $\mathcal F_p$ contains a Stoltz cone at $p,$ nor even a radial segment at $p.$ {\it Au contraire}, all of the components of the $\mathcal F_p$ which we shall obtain will be compact, and of course a radial segment $\{rp: r_0\le r<1\}$ at $p$ cannot be contained in a compact subset of $\B.$  Thus, we may not prescribe angular approach, but we may at almost every point of the boundary prescribe approach from within  a set  whose complement  is asymptotically negligible with respect to Lebesgue measure.

Dmitry Khavinson has pointed out the following interesting instance of Theorem \ref{ball}. For $n=1,$ we take $F_0$ to be the union of two disjoint closed Jordan arcs $\alpha$ and $\beta$  and $\psi$ to be the characteristic function of $\alpha.$ We obtain a holomorphic function $f$ in the unit disc which has the striking property of approximating 1 at every point of $\alpha$  and  simultaneously approximating  0 at every point of $\beta,$  in the strong sense prescribed by the theorem.

Theorem \ref{ball} is a particular instance of the following theorem on complex manifolds, by which we mean connected holomorphic manifolds. 
Every domain $U$ in a complex manifold is automatically a complex manifold and we shall say that $U$ is is a Stein domain if $U$ is Stein qua manifold.
We shall be dealing with Borel measures and we take as part of the definition of a Borel measure that compact sets have finite measure.

\begin{theorem}\label{main}
Let $M$ be a complex manifold endowed with a distance $d$ and let $U\subset M$ be an arbitrary Stein domain. Let $\mu$ be a regular Borel measure on  $M$ and $\nu$ a regular Borel measure on $\partial U.$ Let $\psi$ be a 
Borel measurable function on $\partial U,$  
whose restriction to some closed subset $S\subset\partial U$ is continuous.  
Then,   
there exists a holomorphic function $f$ on $U,$ a set $F\subset \partial U,$ with $S\subset F$ and $\nu(\partial U\setminus F)=0$   such that, for every $p\in F$,   there is a set $\mathcal F_p\subset U,$ such that
$$
	f(x)\to \psi(p), \quad  \mbox{as} \quad  x\to p, \quad x\in \mathcal F_p
$$
and for every $\epsilon>0,$ there is an $r_{p,\epsilon}>0,$ such that
\begin{equation}\label{density}
	\mu\big( \B(p,r)\cap (U\setminus \mathcal F_p)\big)\le \epsilon\cdot\mu\big( \B(p,r)\cap U\big), 
		\quad \mbox{for all} \quad 0<r<r_{p,\epsilon}.
\end{equation}
\end{theorem}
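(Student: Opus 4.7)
The plan is to combine three ingredients: a Lusin-type approximation of $\psi$ by continuous data, a Vitali-style packing of $U$ near $\partial U$ by small $\mathcal O(U)$-convex compact pieces, and the Oka--Weil theorem to glue a single $f\in\mathcal O(U)$ from the pieces by a Mittag--Leffler scheme.

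First, since $\nu$ is regular and $\psi$ is Borel measurable, Lusin's theorem yields an increasing sequence of closed sets $S=S_0\subset S_1\subset S_2\subset\cdots\subset\partial U$ with $\psi|_{S_n}$ continuous and $\nu\bigl(\partial U\setminus\bigcup_n S_n\bigr)=0$; set $F=\bigcup_n S_n$, so that $S\subset F$ and $\nu(\partial U\setminus F)=0$. Using the homeomorphism between $[-1,1]$ and $[-\infty,+\infty]$ recalled in the introduction, each $\psi|_{S_n}$ is continuous into a compact metric space, so Tietze's theorem furnishes a continuous extension to $M$.

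Second, fix a smooth strictly plurisubharmonic exhaustion $\rho$ of $U$ (available since $U$ is Stein) and let $K_k:=\{\rho\le k\}$, a compact $\mathcal O(U)$-convex exhaustion of $U$. I would then construct, inductively in $k$, a finite pairwise disjoint family $\mathcal Q_k$ of compact sets in $U\setminus L_{k-1}$ (with $L_0:=K_1$), each of $d$-diameter at most $2^{-k}$ and each $\mathcal O(U)$-convex (for instance, a small closed coordinate ball), chosen so that $L_k:=L_{k-1}\cup K_k\cup\bigcup\mathcal Q_k$ remains $\mathcal O(U)$-convex (using the standard fact that in a Stein manifold a disjoint union of holomorphically convex compacta is holomorphically convex) and so that $\mathcal F:=\bigcup_k\bigcup\mathcal Q_k$ satisfies the density condition \eqref{density} at $\nu$-a.e.\ $p\in F$. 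The latter is arranged by a countable Vitali-style diagonalization: enumerate a countable dense set of triples $(p,r,\epsilon)\in F\times(0,\infty)\times(0,1)$ and, at stage $k$, include enough small closed coordinate balls inside $B(p,r)\cap(U\setminus L_{k-1})$ to cover it up to relative $\mu$-measure at most $\epsilon$, keeping the balls well separated so that the holomorphic convexity of $L_k$ persists; the regularity of $\mu$ guarantees that this is feasible.

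Third, for each component $C\in\mathcal Q_k$ pick $p_C\in F$ within $d$-distance $2\cdot 2^{-k}$ of $C$ and set $c_C:=\psi(p_C)$. Starting from $h_0:=0$, define a continuous $g_k$ on $L_k$ equal to $h_{k-1}$ on $L_{k-1}$ and equal to the constant $c_C$ on each new component $C\in\mathcal Q_k$; since $g_k$ is holomorphic on a neighborhood of the $\mathcal O(U)$-convex compactum $L_k$, Oka--Weil yields $h_k\in\mathcal O(U)$ with $\|h_k-g_k\|_{L_k}<2^{-k}$. The sequence $(h_k)$ is uniformly Cauchy on each $K_j$, so $f:=\lim_k h_k\in\mathcal O(U)$, and on each $C\in\mathcal Q_k$ we have $|f-c_C|\le 2\cdot 2^{-k}$ by telescoping. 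Taking $\mathcal F_p$ to be the union of those components whose associated $p_C$ lies in a Lusin set $S_n$ containing $p$ and is sufficiently close to $p$, the continuity of $\psi|_{S_n}$ combined with the error bound gives $f(x)\to\psi(p)$ as $x\to p$ within $\mathcal F_p$, while the packing produces \eqref{density}. The principal obstacle is the simultaneous Vitali packing and preservation of $\mathcal O(U)$-convexity in the second step, which is the only place where the regularity of $\mu$, $\nu$, and the Stein hypothesis on $U$ enter in an essential way.
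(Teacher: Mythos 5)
Your overall architecture (Lusin on the boundary, a near-filling family of small holomorphically convex compacta near $\partial U$, constants $\psi(p_C)$ on the pieces, and a telescoping Oka--Weil scheme) has the right general shape, but the step you yourself single out as the principal obstacle rests on a genuine error rather than a routine verification. The ``standard fact'' you invoke --- that in a Stein manifold a disjoint union of holomorphically convex compacta is again holomorphically convex --- is false in dimension $n\ge 2$: Kallin exhibited three pairwise disjoint closed polydiscs in $\C^3$ whose union is not polynomially convex, and even for disjoint closed balls the question is open beyond three. This is exactly why the paper does not work with sprinkled coordinate balls: it embeds everything in $\C^n$ and builds the approach set from \emph{bricks}, partitioning each tile of a serpentine tiling and shrinking the subbricks so that adjacent pieces are separated by real hyperplanes; polynomial convexity of the resulting stratification is proved by induction on dimension (Lemma \ref{blocksPC}), and the union with the previously treated region is handled by Kallin's separation lemma inside Theorem \ref{masonic approximation}, where the earlier tiles union to a single brick. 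Your requirement that the stage-$k$ balls fill $\B(p,r)\cap(U\setminus L_{k-1})$ up to relative $\mu$-measure $\epsilon$ while remaining ``well separated'' is precisely the tension the brick construction resolves (tightly stacked, hyperplane-separated pieces); with round balls and no separation criterion, $\mathcal O(U)$-convexity of $L_k$ cannot be asserted, and without it the Oka--Weil gluing collapses.

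Two further points would need real arguments even if the convexity step were repaired. First, your countable ``Vitali-style diagonalization'' over a dense set of triples $(p,r,\epsilon)$ does not deliver condition (\ref{density}) at \emph{every} $p\in F$: $\mu$ is an arbitrary regular Borel measure, not doubling, so control at a dense set of centres and radii does not transfer to nearby centres and radii; the paper instead proves uniform estimates over all $p$ in a compact subset of $\partial_0 U$ and all radii $s\in[1/(\ell+1),1/\ell]$ (Lemmas \ref{predensity} and \ref{9-12}), with a geometric series in $k$ doing the bookkeeping at every boundary point. Second, after you restrict $\mathcal F_p$ to components whose label $p_C$ lies in the same Lusin set $S_n$ as $p$, nothing guarantees that this restricted union still has asymptotically full density at $p$: points of $F$ near $p$ may predominantly lie in later Lusin sets, where $\psi$ bears no relation to $\psi(p)$. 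The paper avoids this by not assigning raw boundary values to pieces at all: it first builds a single continuous $u$ on $U$ by successive Tietze extensions while excising measure-controlled neighbourhoods of the later Lusin compacta (Lemmas \ref{bundleQK} and \ref{continuous}), so the approach sets $\mathcal E_p$ automatically have small complementary density, and only afterwards approximates $u$ by $g\circ h$ on the Masonic temple via Lemma \ref{g-u}.
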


\bigskip
Let us say that a set $A\subset U$ is of $\mu$-density zero relative to $U$ at a point $p\in\partial U$ if, for each $\epsilon>0,$ there is an $r_{p,\epsilon}>0,$ such that 
$$
	\mu\big( \B(p,r)\cap A\big)\le \epsilon\cdot\mu\big( \B(p,r)\cap U\big), 
		\quad \mbox{for all} \quad 0<r<r_{p,\epsilon}.
$$
By abuse of notation, we shall simply say that $A$ is of density zero at $p.$ Formula (\ref{density}) then is merely the assertion that the complement (in $U$) of  $\mathcal F_p$ is of density zero at $p.$ 
Let $\partial_0U$ be the set of those points $p\in \partial U$ such that $\mu(\B(p, r)\cap U)\neq 0$ for all $r>0$. For $p\in \partial_o U,$ we notice that $A$ is of density zero at $p,$ if and only if 
$$
\lim_{r\searrow 0}\frac{\mu\left(\B(p, r)\cap A\right)}{\mu\left(\B(p, r)\cap U\right)}=0.
$$

To prove Theorem \ref{main}, we henceforth fix such $M, d, U, \mu, \psi, S, \nu$ and we also fix some proper holomorphic embedding $h:M\to\C^n$ of $M$ into some complex Euclidean space $\C^n.$ 
We denote by $\mu_*$ the push-forward by $h$  of $\mu$ on $h(U).$ Since $h:U\to h(U)$ is a proper embedding, $\mu_*$ is a regular Borel measure on $h(U).$ By abuse of notation, we also denote by $\mu_*$ the trivial extension of $\mu_*$ to Borel subsets of $\C^n.$ That is, for every Borel subset $A\subset\C^n,$ we put $\mu_*(A)=\mu_*\big(A\cap h(U)\big).$  Since $h(U)$ is closed in $\C^n,$ it follows that the extension $\mu_*$ is also a regular Borel measure on $\C^n.$

The boundary value problem, which we discuss in Theorem \ref{main} for holomorphic functions of several complex variables, was considered in \cite{FG1} for holomorphic functions of a single complex variable. The analogous problem for harmonic functions on Riemannian manifolds was treated in \cite{FG2}.  The methods employed in these two papers were different and in \cite{FG3} a unified approach for holomorphic functions of a single variable and harmonic functions of several variables is presented by employing the Lax-Malgrange Theorem expressed in the language of vector bundles.
The Lax-Malgrange Theorem is for differential operators between bundles of equal rank. The Cauchy-Riemann operator on a complex manifold of dimension $n$ is a mapping from a bundle of rank $1$ to a bundle of rank $n,$ so we only have equal rank when $n=1.$  
For several complex variables, we thus require a different technique and to this end we shall introduce``bricks" in Section \ref{bricks}.


\section{Brick Constructions}\label{bricks}

A compact set $K\subset\C^n$ is said to be {\it polynomially convex} if, for every $z\not\in K,$ there is a holomorphic polynomial $p,$ such that $|p(z)|>|p(\zeta)|,$ for every $\zeta\in K.$ By the Oka-Weil theorem, if $K$ is polynomially convex, then every holomorphic function on $K$ can be uniformly approximated by holomorphic polynomials.

The most basic domains in $\C^n$ are balls and polydiscs. Closed balls and closed discs are polynomially convex and so are good sets on which to approximate holomorphic functions by holomorphic polynomials. 
 However, finite unions of disjoint closed balls or polydiscs are not in general polynomially convex. The fundamental tool regarding the polynomial convexity of the union of two sets is the Separation Lemma of Eva Kallin which asserts that the union of two polynomially convex compacta is polynomially convex if they can be separated by a (real) hyperplane. 
It turns out that (real) bricks, that is, bricks in $\C^n$ viewed as $\R^{2n}$ are more convenient, in that it is possible to approximate on finite unions of disjoint closed bricks, provided they are appropriately ``stacked", taking into account the separation lemma. Approximation on appropriately stacked unions of closed balls and closed polydiscs is also possible, but stacks of bricks have the advantage that they can be stacked ``more tightly".
Approximation on bricks was exploited in \cite{BG} to study universality, but the term bricks was not employed there.

\subsection{Bricks, Stratification}

Consider a brick $B\subset\C^n:$
$$B=\left\{(x_1, y_1, \dots, x_n, y_n) : a_t\leq x_t\leq b_t,\, c_t\leq y_t\leq d_t,\, t=1, \dots, n\right\}.$$
We wish to partition $B$ into smaller (sub)bricks by inserting some real hyperplanes perpendicular to the coordinate axes $\left\{x_1,y_1,\ldots,x_n,y_n\right\}$. Suppose the real hyperplanes orthogonal to the coordinates are 
\begin{equation}\label{hyperplaneequation}
\begin{split}
a_1=\lambda_{1,0}&<\lambda_{1,1}<\lambda_{1,2}<\cdots<\lambda_{1, r_1}<\lambda_{1,r_1+1}=b_1\\
c_1=\mu_{1, 0}&<\mu_{1, 1}<\mu_{1, 2}<\cdots<\mu_{1, s_1}<\mu_{1, s_1+1}=d_1\\
&\cdots\hspace{4cm}\cdots\\
a_n=\lambda_{n,0}&<\lambda_{n,1}<\lambda_{n,2}<\cdots<\lambda_{n, r_n}<\lambda_{n, r_n+1}=b_n\\
c_n=\mu_{n, 0}&<\mu_{n, 1}<\mu_{n, 2}<\cdots<\mu_{n, s_n}<\mu_{n, s_n+1}=d_n
\end{split}
\end{equation}
for some $r_1, s_1, \cdots, r_n, s_n\in \N$ and $\lambda_{i, j}, \mu_{i, j}\in \R$. 
These hyperplanes partition $B$ into finitely many bricks $(\beta_{i, j}),$ where the index $i, j$ is clarified as follows
\begin{equation}\label{partitioncoordinates}
\begin{split}
\beta_{i,j}&= \beta_{i_1, \dots, i_n, j_1, \dots, j_n}\\
&=\left\{(x_1, y_1, \dots, x_n, y_n) : \lambda_{t, i_t}\leq x_t\leq \lambda_{t, (i_t+1)},\,\mu_{t, j_t}\leq y_t\leq \mu_{t, (j_t+1)},\, t=1, \dots, n\right\},
\end{split}
\end{equation}
where $0\leq i_t\leq r_t$, and $0\leq j_t\leq s_t$.

We also introduce bricks $\widetilde{\beta}_{i,j}\subset\beta_{i,j}^{\mathrm{o}},$  $\widetilde{\beta}_{i,j}=\widetilde{\beta}_{i_1, \dots, i_n, j_1, \dots, j_n}$, obtained by inserting two new parallel hyperplanes between each consecutive parallel pair from the previous hyperplanes. That is, the $x_t$ and $y_t$ coordinates of $\widetilde{\beta}_{i,j}$ are given by
\begin{equation}\label{hyperplanes}
\begin{split}
\lambda_{t, i_t}&<\lambda'_{t, i_t}\leq x_t\leq\lambda''_{t, (i_t+1)}<\lambda_{t, (i_t+1)},\\
\mu_{t, j_t}&<\mu'_{t, j_t}\leq y_t\leq \mu''_{t, (j_t+1)}< \mu_{t, (j_t+1)},
\end{split}
\end{equation}
In other words, we used one prime for a hyperplane inserted to the right hand side of a fixed hyperplane $x_t=\lambda_{ti_t}$ (or $y_t=\mu_{tj_t}$) and a double prime for a hyperplane added to the left hand side of $x_t=\lambda_{ti_t}$  (or $y_t=\mu_{tj_t}$). For such a construction, we say that the family of bricks $(\widetilde{\beta}_{i,j})$  is a \textit{shrinking} of the family of bricks $(\beta_{i,j})$ and we call the compactum $\widetilde  B=\cup\widetilde{\beta}_{i,j}$ a {\it stratification} of the brick $B$.

\begin{lemma}\label{blocksPC}
Let $B\subset\C^n$ be a brick,  let $(\beta_{i,j})$ be a partition of $B$ by bricks induced by  hyperplanes orthogonal to the real coordinates of $\C^n.$ Then, every stratification $\widetilde B=\bigcup_{i,j}\widetilde{\beta}_{i,j}$ is polynomially convex. 
\end{lemma}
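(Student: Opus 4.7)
The plan is to prove polynomial convexity of $\widetilde{B}=\bigcup_{i,j}\widetilde{\beta}_{i,j}$ by induction on the number $N$ of shrunken bricks in the stratification. Each individual $\widetilde{\beta}_{i,j}$ is a Cartesian product of $2n$ closed real intervals, hence a compact convex subset of $\C^n\cong\R^{2n}$; since every compact convex subset of $\C^n$ is polynomially convex (exhaust by closed half-spaces $\{\operatorname{Re}\ell\le c\}$ and approximate the exponential of $\ell$ by polynomials), the base case $N=1$ is immediate. The inductive step will glue two polynomially convex pieces via the Kallin separation lemma recalled at the start of Section \ref{bricks}.

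The essential geometric input is the strict separation built into the shrinking step. For each $t\in\{1,\ldots,n\}$ and each $k\in\{1,\ldots,r_t\}$, the inequalities (\ref{hyperplanes}) show that every $\widetilde{\beta}_{i,j}$ with $i_t<k$ satisfies $x_t\le\lambda''_{t,k}<\lambda_{t,k}$, while every $\widetilde{\beta}_{i,j}$ with $i_t\ge k$ satisfies $x_t\ge\lambda'_{t,k}>\lambda_{t,k}$; the real hyperplane $\{x_t=\lambda_{t,k}\}$ therefore strictly separates these two subfamilies. The same holds for the hyperplanes $\{y_t=\mu_{t,k}\}$. Each such hyperplane has the form $\{\operatorname{Re}\ell=c\}$ for a complex linear functional $\ell$, so it is of the type handled by Kallin's lemma.

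For the inductive step, assume the result for all stratifications with fewer than $N$ bricks and take one with $N\ge 2$ bricks. Two of its shrunken bricks have indices differing in some $x_t$- or $y_t$-coordinate, so some original hyperplane from (\ref{hyperplaneequation}) splits the family of shrunken bricks into two nonempty subfamilies $\widetilde{B}_1$ and $\widetilde{B}_2$, each of cardinality strictly less than $N$. Each $\widetilde{B}_j$ is itself a stratification of a sub-brick of $B$ (the piece of $B$ on the corresponding side of the chosen hyperplane, with the remaining primed and double-primed insertions retained), so the induction hypothesis gives that both $\widetilde{B}_1$ and $\widetilde{B}_2$ are polynomially convex. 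Kallin's lemma, applied with the strict separation above, then yields polynomial convexity of $\widetilde{B}=\widetilde{B}_1\cup\widetilde{B}_2$, closing the induction. The only point calling for even mild care is verifying that a one-sided restriction of the stratification is again a stratification in the sense of Section \ref{bricks}, but this reduces to the tautological remark that removing the rows on one side of a partitioning hyperplane leaves the remaining insertion data intact.
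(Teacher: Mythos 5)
Your proof is correct, but it follows a genuinely different route from the paper's. The paper proves Lemma \ref{blocksPC} by induction on the complex dimension $n$: it observes that each shrunken brick factors as $\widetilde{\beta}^{(n+1)}_{i,j}=\widetilde{\beta}^{(n)}_{i,j}\times\widetilde{\beta}^{(1)}_{i_{n+1},j_{n+1}}$, so the whole stratification is a product $\widetilde B^{(n+1)}=\widetilde B^{(n)}\times\widetilde B^{(1)}$; the case $n=1$ is settled by noting that a finite union of disjoint compacta with connected complements is polynomially convex, and the inductive step uses only the fact that a product of polynomially convex compacta is polynomially convex. In particular, Kallin's lemma is never actually invoked in the paper's proof of this lemma, whereas it is the engine of yours. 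Your induction on the number of shrunken bricks is sound: since distinct bricks carry distinct multi-indices, some original hyperplane $\{x_t=\lambda_{t,k}\}$ or $\{y_t=\mu_{t,k}\}$ splits the family into two nonempty subfamilies, the primed and double-primed insertions give strict separation by that hyperplane, each subfamily is a stratification of the corresponding sub-brick with strictly fewer pieces, and the base case (a single closed brick, compact and convex) is polynomially convex. What each approach buys: the paper's product argument is shorter and needs only elementary one-variable hulls plus the product property; your argument relies on the hyperplane version of Kallin's separation lemma (stated but not proved in the paper), and in exchange it does not exploit the global product structure of a stratification, so it would apply verbatim to more general hyperplane-separated stacks of disjoint bricks.
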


\begin{proof}
We may use the superscript (n) to indicate that a set $A^{(n)}$ is in $\C^n$, e.g. $B^{(n)}, \, \beta^{(n)}_{i, j},$ etc.

If $n=1$, then the proof is trivial. A finite union of disjoint compacta having connected complements is again of
connected complement and hence polynomially convex. Being separated by hyperplanes is even redundant in this case. So we assume the claim is true for $n>1$. Let $B^{(n+1)}$ be a brick in $\C^{n+1},$  let $(\beta_{i,j}^{(n+1)})$ be a partition of $B^{(n+1)}$ by bricks as above and $(\widetilde{\beta}_{i,j}^{(n+1)})$ a shrinking thereof. We have
$$
	\beta_{i,j}^{(n+1)}=
\beta_{i_1,\ldots,i_{n+1},j_1,\ldots,j_{n+1}}^{(n+1)}=
\beta^{(n)}_{i_1,\ldots,i_n,j_1\ldots,j_n}\times\beta^{(1)}_{i_{n+1},j_{n+1}}=
	\beta^{(n)}_{i,j}\times\beta^{(1)}_{i_{n+1},j_{n+1}}
$$
and for the shrinking
$$
	\widetilde{\beta}^{(n+1)}_{i,j}=\widetilde{\beta}^{(n)}_{i,j}\times\widetilde{\beta}^{(1)}_{i_{n+1},j_{n+1}}.
$$
Thus, 
$$
	\widetilde B^{(n+1)}=\widetilde B^{(n)}\times \widetilde B^{(1)}, \quad \mbox{ where } \quad \widetilde B^{(n)}=\cup_{ i, j}\widetilde{\beta}^{(n)}_{i, j}, 
	\quad \widetilde B^{(1)}=\cup\widetilde{\beta}^{(1)}_{i_{n+1}, j_{n+1}}.
$$
$\widetilde B^{(1)}$ is polynomially convex by the trivial case $n=1$ and $\widetilde B^{(n)}$ is polynomially convex by the induction hypothesis. Thus, $\widetilde B^{(n+1)}$ is polynomially convex as a product of polynomially convex sets. 

\end{proof}

The following lemma shows that a stratification of a block can be chosen such that the interior of the block minus its stratification has arbitrarily small measure. Consequently, in the subsequent lemmas, we show that the density of the pre-image  (under $h$) can also be made  arbitrary small.

\begin{lemma}\label{stratification}
Given an arbitrary brick $B$,  an arbitrary $\delta>0,$
and an arbitrary regular Borel measure $\eta$ on $\C^n,$
there exists a partition $(\beta_{i,j})$ of $B$ such that, for every brick  $\beta_{i,j}$ of the partition, $diam\,\beta_{i,j}<\delta$ and there exists a stratification $\widetilde B=\cup\widetilde{\beta}_{i,j}$ of this partition, such that 
$$
	\eta\big((B\setminus \widetilde B)^{\mathrm{o}}\big) = \eta\left(\bigcup_{i,j}\,(\beta_{i,j}^{\mathrm{o}}\setminus\widetilde\beta_{i,j})\right)<\delta.
$$
\end{lemma}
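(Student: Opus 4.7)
The plan is to first choose a partition of $B$ into sub-bricks of diameter less than $\delta$ whose interior partition hyperplanes have zero $\eta$-mass within $B$, and then to insert the stratifying hyperplanes close enough to these cuts that the finitely many resulting open slabs have total $\eta$-mass less than $\delta$.

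First I would argue that for each fixed coordinate direction, say $x_t$, the slices $B\cap\{x_t=c\}$ for varying $c\in\R$ are pairwise disjoint Borel subsets of $B$; since $B$ is compact and hence $\eta(B)<\infty$, the standard argument (for each $m\in\N$, only finitely many $c$ satisfy $\eta(B\cap\{x_t=c\})>1/m$) shows that the set of $c$ giving slices of positive $\eta$-mass is at most countable. The same holds for each $y_t$-direction. Therefore I can subdivide each coordinate interval of $B$ into pieces of length less than $\delta/\sqrt{2n}$ with cut points $\lambda_{t,k},\mu_{t,k}$ chosen outside these countable bad sets, obtaining a partition $(\beta_{i,j})$ in which every brick has diameter less than $\delta$ and every interior partition hyperplane meets $B$ in a set of zero $\eta$-mass.

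Next I would observe that, since $\widetilde B$ is closed in $\C^n$ and contained in $B^\circ$, we have $(B\setminus\widetilde B)^\circ=B^\circ\setminus\widetilde B$. Unpacking the definition (\ref{hyperplanes}) of a stratification, a point of $B^\circ$ lies outside $\widetilde B$ precisely when, for some $t$, either $x_t$ fails to lie in $\bigcup_{i_t=0}^{r_t}[\lambda'_{t,i_t},\lambda''_{t,i_t+1}]$ or $y_t$ fails to lie in the analogous union. Consequently, $B^\circ\setminus\widetilde B$ decomposes as a finite union of open slabs of the form $\{p\in B^\circ:\lambda''_{t,k}<x_t(p)<\lambda'_{t,k}\}$ for $0\le k\le r_t+1$ (with the convention $\lambda''_{t,0}=a_t$ and $\lambda'_{t,r_t+1}=b_t$) and the analogous slabs for $y_t$.

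As $\lambda'_{t,k}$ and $\lambda''_{t,k}$ are allowed to tend to $\lambda_{t,k}$, each interior slab decreases to $B^\circ\cap\{x_t=\lambda_{t,k}\}$, of $\eta$-mass zero by the first step, while each boundary slab ($k=0$ or $k=r_t+1$) decreases to the empty set. Continuity from above of $\eta$ (applicable since each slab lies in $B$, of finite $\eta$-mass) therefore makes the $\eta$-mass of every slab arbitrarily small. Letting $M$ denote the total (finite) number of slabs, I would then choose each pair of stratifying hyperplanes close enough to its associated cut to force the corresponding slab to have $\eta$-mass less than $\delta/M$; summing would then yield $\eta((B\setminus\widetilde B)^\circ)<\delta$. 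The main obstacle is the first step: without arranging the interior partition hyperplanes to sit on loci of zero $\eta$-mass, the slabs around them could not be shrunk in $\eta$-measure no matter how thinly they were taken.
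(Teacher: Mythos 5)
Your proof is correct and follows the same overall strategy as the paper's: cut $B$ into bricks of diameter less than $\delta$, observe that $B^{\mathrm{o}}\setminus\widetilde B$ is a finite union of thin open slabs surrounding the cut hyperplanes (the paper's sets $X_{t,i},Y_{t,j}$ of (\ref{XY})), and then place the stratifying hyperplanes close enough to the cuts that the total $\eta$-mass of the slabs is below $\delta$. The genuine difference is your preliminary step of perturbing the cut values so that every partition hyperplane meets $B$ in an $\eta$-null slice, using the fact that only countably many parallel slices can carry positive mass because $\eta(B)<\infty$. The paper omits this step and simply asserts, citing regularity, that $\eta(X_{t,i})\to 0$ as the inserted hyperplanes approach the cuts; for the two boundary slabs this is plain continuity from above, but each interior slab straddles its cut and decreases to $B^{\mathrm{o}}\cap\{x_t=\lambda_{t,i}\}$ rather than to $\emptyset$, so the asserted limit is really the mass of that slice. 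Your null-slice choice of the partition is exactly what makes that step valid, and some such choice is unavoidable: if an interior cut hyperplane carried $\eta$-mass at least $\delta$ inside $B^{\mathrm{o}}$, no stratification of that particular partition could satisfy the conclusion, so the freedom to choose the partition has to be exploited. Your appeal to continuity from above for decreasing sets of finite measure, rather than to regularity, is also the more precise justification. In short, you prove the lemma by the same mechanism as the paper, but you make explicit (and correctly justify) a point that the paper's proof leaves implicit.
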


\begin{proof}
We may denote
$$B=\left\{(x_1, y_1, \dots, x_n, y_n) : a_t\leq x_t\leq b_t,\, c_t\leq y_t\leq d_t,\, t=1, \dots, n\right\}.$$
The distance between every two points in  $(\beta_{i,j})$, given by  (\ref{partitioncoordinates}), is dominated by 
\begin{equation}\label{diameter}
\sum_{t=1}^n
\left((\lambda_{t, (i_t+1)}-\lambda_{t, i_t}) + (\mu_{t, (j_t+1)}-\mu_{t, j_t})\right).
\end{equation}

Since there are at most $4n$ values $a_t, b_t, c_t, d_t$, $t=1, \ldots, n,$ we may choose the coordinates of the  $\beta_{i,j}$'s in (\ref{hyperplaneequation}) to ensure that (\ref{diameter}) is less than $\delta$, so we obtain that $diam\,\beta_{i,j}<\delta$, for each $\beta_{i,j}.$

For the stratification $\widetilde B$ of  $B$, with the  coordinates in (\ref{hyperplanes}), for the additional hyperplanes, which we insert,  we have
$$
(B\setminus \widetilde B)^{\mathrm{o}}\subset\bigcup_{t=1}^n \left(
(\bigcup_{i=0}^{r_t+1}X_{t, i}) \cup (\bigcup_{j=0}^{ s_t+1}Y_{t, j}
)\right),
$$
for the open bricks $X_{i,j}$ and $Y_{i,j}$ given by 
\begin{equation}\label{XY}
\begin{split}
X_{t, 0}&=B^{\mathrm{o}}\cap
\left(\lambda_{t, 0}< \, x_t <\lambda'_{t, 0}\right), \\
X_{t, i}&= B^{\mathrm{o}}\cap
\left(\lambda_{t, i}''< \, x_t <\lambda'_{t, i}\right), i=1, \dots, r_t \\
X_{t, (r_t+1)}&=B^{\mathrm{o}}\cap
\left(\lambda_{t, (r_t+1)}''< \, x_t <\lambda_{t, (r_t+1)}\right), \\
Y_{t, 0}&=B^{\mathrm{o}}\cap
\left(\mu_{t, 0}< \, y_t <\mu'_{t, 0}\right), \\
Y_{t, j}&= B^{\mathrm{o}}\cap
\left(\mu_{t, j}''< \, y_t <\mu'_{t, j}\right), j=1, \dots, s_t \\
Y_{t, (s_t+1)}&= B^{\mathrm{o}}\cap
\left(\mu_{t, (s_t+1)}''< \, y_t <\mu_{t, (s_t+1)}\right),\\
\end{split}
\end{equation}
for $t=1, \dots, n$. 
Since the measure $\eta$ is regular, 
$$
	 \eta(X_{t, i})\longrightarrow 0, \quad  \mbox{as}\quad \lambda_{t, i}^\prime\searrow \lambda_{t, i} 		\quad \mbox{and} {\quad \lambda''_{t, (i+1)}}\nearrow\lambda_{t, (i+1)};
$$
$$
	 \eta(Y_{t, j})\longrightarrow 0, \quad  \mbox{as}\quad \mu'_{t, j}\searrow\mu_{t, j}
	\quad \mbox{and} \quad	\mu''_{t, (j+1)}\nearrow\mu_{t, (j+1)} .
$$
We may thus choose the  hyperplanes in (\ref{hyperplanes}), so that 
\begin{equation*}
\begin{split}	
	\eta\left((B\setminus \widetilde B)^{\mathrm{o}}\right) &\le \sum_{t=1}^n \eta\left(
(\bigcup_{i=0}^{r_t+1}X_{t, i}) \cup (\bigcup_{j=0}^{s_t+1}Y_{t, j}
)\right)\\
&\le \sum_{t=1}^n \left(\sum_{i=0}^{r_t+1}\eta(X_{t, i})+\sum_{j=0}^{s_t+1}\eta(Y_{t, j})\right)< \delta. 
\end{split}
\end{equation*}
\end{proof}

\begin{lemma}\label{predensity}
Given an arbitrary brick $B,$  and an arbitrary $\delta>0,$ there exists a partition $(\beta_{i,j})$ of $B$ such that, for every brick  $\beta_{i,j}$ of the partition, $diam\,\beta_{i,j}<\delta$ and
given a compact set $K\subset \partial_0 U$, for every $0<r<t,$
there exists a stratification $\widetilde B=\cup\widetilde{\beta}_{i, j}$ of this partition, such that
$$
	\mu(\B(p, s)\cap h^{-1}\big((B\setminus \widetilde B)^{\mathrm{o}}\big))\le \delta\cdot\mu\big(\B(p, s)\cap U\big), 
	\quad \forall p\in K, \quad \forall s\in [r,t]. 
$$
\end{lemma}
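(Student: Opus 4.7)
The plan is to reduce this lemma to Lemma~\ref{stratification} applied to the pushed-forward measure $\mu_*$, converting its absolute bound into the ratio bound required here by means of a uniform positive lower estimate for $\mu(\B(p,s)\cap U)$ on $K\times[r,t]$. So first I would establish that $c:=\inf_{p\in K}\mu(\B(p,r)\cap U)>0$. The function $\varphi(p):=\mu(\B(p,r)\cap U)$ is lower semicontinuous on $M$: if $p_n\to p$ in $M$ and $0<r'<r$, then $\B(p,r')\subset\B(p_n,r)$ for all sufficiently large $n$, whence $\varphi(p_n)\ge \mu(\B(p,r')\cap U)$; taking $\liminf$ and then letting $r'\nearrow r$ (using continuity from below of $\mu$, since $\B(p,r')\cap U\nearrow \B(p,r)\cap U$) yields $\liminf_n\varphi(p_n)\ge\varphi(p)$. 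Since $K\subset\partial_0U$, $\varphi$ is strictly positive on the compactum $K$, so $c>0$.

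Next, I would apply Lemma~\ref{stratification} to the brick $B$, the regular Borel measure $\eta:=\mu_*$ on $\C^n$, and with the parameter of that lemma replaced by $\delta':=\min(\delta,\delta c)>0$. This produces a partition $(\beta_{i,j})$ of $B$ with $\operatorname{diam}\beta_{i,j}<\delta'\le\delta$ and a stratification $\widetilde B=\bigcup_{i,j}\widetilde\beta_{i,j}$ satisfying $\mu_*\bigl((B\setminus\widetilde B)^{\mathrm o}\bigr)<\delta c$. Then, for any $p\in K$ and $s\in[r,t]$, the defining identity $\mu_*(A)=\mu\bigl(h^{-1}(A)\cap U\bigr)$ together with monotonicity of $\mu$ gives
\[
\mu\bigl(\B(p,s)\cap h^{-1}((B\setminus\widetilde B)^{\mathrm o})\bigr)\le \mu_*\bigl((B\setminus\widetilde B)^{\mathrm o}\bigr)<\delta c\le \delta\,\mu(\B(p,r)\cap U)\le \delta\,\mu(\B(p,s)\cap U),
\]
where the last inequality uses $\B(p,r)\cap U\subset\B(p,s)\cap U$ for $s\ge r$.

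The main obstacle is the positivity of the infimum $c$: without it, the absolute measure bound furnished by Lemma~\ref{stratification} cannot be converted into the ratio bound demanded here, because the ratio $\mu_*(A)/\mu(\B(p,s)\cap U)$ need not be controlled uniformly in $p$ otherwise. Once $c>0$ is secured via the lower-semicontinuity step above, everything else is simply a rescaling of the $\delta$-parameter in Lemma~\ref{stratification} followed by routine monotonicity.
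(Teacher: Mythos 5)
Your proposal is correct and follows essentially the same route as the paper: both reduce to Lemma~\ref{stratification} applied to the push-forward $\mu_*$ with a rescaled parameter, and both hinge on the uniform positive lower bound $\inf_{p}\mu(\B(p,r)\cap U)>0$ over the compactum, obtained from compactness together with the definition of $\partial_0U$ (the paper argues by a convergent-sequence contradiction and restricts first to the subset of $K$ whose $t$-balls meet $h^{-1}(B)$, while you argue via lower semicontinuity over all of $K$ -- a cosmetic difference). The final monotonicity step $\B(p,r)\subset\B(p,s)$ for $s\in[r,t]$ is identical.
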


\begin{proof}
Suppose a brick $B$, $K,$ $\delta>0$ and  $0<r<t$ are given. Proof of the first part is the same as Lemma \ref{stratification}. To prove the second part, define
$$
I_t=\left\{q\in K : \B(q, t)\cap h^{-1}(B)\neq \emptyset \right\}.
$$
For $p\in K\setminus I_t,$ the result is trivial, since for such $p$
and for all $s\in [r, t]$ one has 
$$\B(p, s)\cap h^{-1}(B\setminus \widetilde B)\subseteq \B(p, t)\cap h^{-1}(B)=\emptyset.$$

Now, suppose $p\in I_t$. 
We claim that, if
$$M:=\inf_{q\in I_t} \mu \left(\B(q, r) \cap U \right)\neq 0,$$
then the proof is complete. Here is the reasoning. 
Invoking Lemma (\ref{stratification}), for the push forward $\mu_*$ of $\mu,$ there exists a stratification $\widetilde B$ of $B$ such that
\begin{equation}
\mu ( h^{-1}((B\setminus \widetilde B)^{\mathrm{o}}) )=\mu_*((B\setminus \widetilde B)^{\mathrm{o}})\leq \delta M.
\end{equation}
Therefore, for $p\in I_t$, and $s\in [r, t]$, the above implies that
\begin{equation*}
\begin{split}
\mu (\B(p,s)\cap h^{-1}((B\setminus \widetilde B)^{\mathrm{o}}) )\le \mu (h^{-1}\big((B\setminus \widetilde B)^{\mathrm{o}}\big) )\le \delta M\leq \delta \cdot \mu \left(\B(p, r) \cap U \right)\leq \delta \cdot \mu \left( \B(p, s) \cap U \right).
	\end{split}
\end{equation*}

Now we show that $M\neq 0$. Suppose $M=0$, then there exists a sequence $(p_j)$ of elements in $I_t\subseteq K$ such that 
$$\lim_{j\rightarrow \infty}\mu\left(\B(p_j, r) \cap U\right)=0.$$
By the compactness of $K$, there is a point $p\in K$ and a subsequence of $(p_j)$ converging to $p$. By abuse of notation, we may assume this subsequence is $(p_j)$. For an arbitrary $\epsilon>0$, one can choose $j_\epsilon\in \N$ large enough such that $d(p_j, p)<r/2$ and $\mu\left(\B(p_j, r) \cap U\right)<\epsilon$ for every $j>j_\epsilon.$ One, therefore, has
$$
\mu\left(\B(p, r/2) \cap U\right)\leq \mu \left(\B(p_j, r) \cap U\right)\leq \epsilon.
$$
As epsilon was arbitrary, we can conclude that $\mu\left(\B(p, r/2) \cap U\right)=0$; that is, we have shown that there exists a neighbourhood of  $p$
 such that the measure of its intersection with $U$ is zero, which contradicts our assumption that $p\in \partial_0U.$
\end{proof}


\subsection{Serpentine Tiling and Direction}
We can fix an order on the $4n$ directions given by the  positive  and negative $x_k$-directions and the positive and negative $y_k$-direction, $ k=1, \dots, n$; that is, the order 
$$+x_1, -x_1,+y_1, -y_1, \cdots, +x_n, -x_n, +y_n, -y_n.$$ 
Rename these as the $k$-directions, where $k=1,2,\ldots, 4n.$
For $k\in\N,$ we define the $k$-direction to mean the $k$ (mod $4n$) direction.

\begin{definition} A brick $B$ has $4n$ faces. Let us call the face whose outward normal is in the $k$-direction, the $k$-side of $B.$ We define the $k$-reflection $k(B)$ to be the reflection of $B$ in the $k$-side of $B.$  We extend this definition to $k\in \N$ (mod $4n$). We define the $k$-double $D_k(B)$ of the brick $B$ to be the brick $B\cup k(B).$
\end{definition}

If $B$ is a brick and $k=1,2,\ldots$, then the $k$-double $D_k(B)$ is polynomially convex as it is a brick.

Let us construct a special sequence of bricks $B_k, \, k=1,2, \dots$. We take $B_1$ to be an arbitrary brick in $\C^n$. Let $B_2$ be the $1$-reflection $1(B_1)$ of $B_1.$ We put $B_3=2(B_1\cup B_2).$  
Having defined $B_1,\dots ,B_\ell$, we define $B_{\ell+1}$ as  $k(B_1 \cup B_2 \cup . . .\cup B_\ell)$, where $k=\ell$ (mod $4n$).

\begin{lemma}\label{tiling} For an arbitrary brick $B_1$ in $\C^n,$ the family of bricks $(B_k)_{k=1}^\infty$ defined as above forms a tiling of $\C^n.$ That is, for $k\not=j, \, B_k^{\mathrm{o}}\cap B_j^{\mathrm{o}}=\emptyset$ and $\cup_k B_k=\C^n$.
\end{lemma}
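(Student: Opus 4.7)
The plan is to set $A_k = B_1\cup B_2\cup\cdots\cup B_k$ and show simultaneously, by induction on $k$, that (i) $A_k$ is itself a brick, and (ii) the interiors $B_1^{\mathrm o},\dots,B_k^{\mathrm o}$ are pairwise disjoint. For (i), note that $A_1=B_1$ is a brick; if $A_k$ is a brick, then by definition $B_{k+1}=k(A_k)$ is the reflection of $A_k$ in one of its faces, and $A_{k+1}=A_k\cup B_{k+1}$ is exactly the $k$-double $D_k(A_k)$, which is a brick. For (ii), the reflection $B_{k+1}$ sits in the closed half-space on the opposite side of the reflecting hyperplane from $A_k$, so $A_k\cap B_{k+1}$ is contained in that hyperplane and $A_k\cap B_{k+1}^{\mathrm o}=\emptyset$. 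Since $A_k=B_1\cup\cdots\cup B_k$, the interior of $B_{k+1}$ meets none of the previously constructed $B_j$; combined with the inductive hypothesis, this gives the required pairwise disjointness.

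It then remains to show $\bigcup_k B_k=\C^n$, i.e.\ that the increasing sequence of bricks $A_k$ exhausts $\C^n$. The plan is to track the coordinate extents. Fix a coordinate, say $x_1$, and suppose the $x_1$-extent of $A_\ell$ is $[u,v]$ with $v-u=L$. When at some later step $m$ we reflect in the $+x_1$ face (respectively the $-x_1$ face), the $x_1$-extent of $A_{m+1}$ becomes $[u,2v-u]$ (respectively $[2u-v,v]$); in either case its length doubles and the brick is extended on the corresponding side. Inside each consecutive block of $4n$ steps, each of the $4n$ coordinate directions $\pm x_t,\pm y_t$ is used exactly once as the reflecting direction, so every coordinate extent quadruples and is simultaneously pushed outward on both its positive and negative sides. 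Iterating, after $N$ full cycles each of the $2n$ real coordinate extents of $A_{4nN}$ has length $4^N(\text{length of the corresponding extent of }B_1)$, with its lower endpoint tending to $-\infty$ and its upper endpoint to $+\infty$ as $N\to\infty$.

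Given any $z\in\C^n$, one may therefore choose $N$ large enough that $z$ lies inside the brick $A_{4nN}$, and hence inside some $B_k$ with $k\le 4nN$. Combined with pairwise disjointness of interiors established above, this shows that the family $(B_k)_{k=1}^\infty$ tiles $\C^n$ in the sense of the lemma.

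I expect the argument to be essentially bookkeeping; the only genuinely delicate point is making sure that over each cycle of $4n$ reflections every coordinate is extended on \emph{both} its positive and negative sides by an amount that itself grows geometrically, so that the brick $A_k$ escapes to infinity in every direction rather than only in net diameter. Once the doubling recursion for the one-dimensional extents is written out carefully, the covering conclusion follows immediately.
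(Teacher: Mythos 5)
Your proposal is correct and follows essentially the same route as the paper: disjointness because each new brick is the reflection of the union $A_k$ of its predecessors across a face (so it lies in the opposite closed half-space), and covering by tracking the coordinate extents of the bricks $A_k$ and noting that in each cycle of $4n$ reflections every coordinate interval is pushed outward on both sides by amounts growing geometrically. The only cosmetic difference is that the paper first translates so that $0\in B_1^{\mathrm o}$ to get the explicit bounds $a_1^{(j\cdot 4n)}<-2^j b_1^{(1)}$, $b_1^{(j\cdot 4n)}>2^j b_1^{(1)}$, whereas you track the doubling of the interval lengths directly, which works just as well.
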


\begin{proof}
It is clear that for $k>j, \, B_k^{\mathrm{o}}\cap B_j^{\mathrm{o}}=\emptyset,$ since every $B_k, k>1,$ is a reflection of the brick $\cup_{j<k} B_j.$

We may write
$$
	\cup_{j=1}^kB_j =\left \{(x_1, y_1, \dots, x_n, y_n):a_t^{(k)}\le x_t\le b_t^{(k)}, \, c_t^{(k)}\le y_t\le d_t^{(k)}, \, t=1,\ldots,n\right \}.
$$
To show that we have a tiling, it is sufficient to show that, for $t=1,\ldots,n,$
\begin{equation}\label{tiling}
a_t^{(k)}\searrow -\infty, \quad b_t^{(k)}\nearrow+\infty, \quad  c_t^{(k)}\searrow -\infty, \quad d_t^{(k)}\nearrow+\infty,
\quad \mbox{as} \quad k\to+\infty.
\end{equation}
Note that $\cup_kB_k$ forms a tiling of $\C^n$ if and only if $\cup_k (B_k+v)$ where $v$ is a vector in $\C^n$ is a tiling; that is, a translation of a tiling is a tiling. 
Thus, we may assume that $0\in B_1^{\mathrm{o}},$  equivalently,
$$
		a_t^{(1)}<0<b_t^{(1)}, \quad c_t^{(1)}<0<d_t^{(1)}, \quad \mbox{for} \quad t=1,\ldots,n.
$$

To show (\ref{tiling}), we begin by showing that 
\begin{equation}\label{ab1k}
	\lim_{k\to+\infty}a_1^{(k)}=-\infty \quad \mbox{and}	\quad  \lim_{k\to+\infty}b_1^{(k)}=+\infty. 	
\end{equation}

The first reflection is in the $(+x_1)$-direction, so $a_1^{(2)}=a_1^{(1)}$ and $b_1^{(2)}=b_1^{(1)}+(b_1^{(1)}-a_1^{(1)})>2b_1^{(1)},$ bearing in mind that $a_1^{(1)}<0.$ The second reflection is in the $(-x_1)$-direction, so $b_1^{(3)}=b_1^{(2)}$ and $a_1^{(3)}=a_1^{(2)}-(b_1^{(2)}-a_1^{(2)})<-2b_1^{(1)}.$
One should note that the next $4n-2$ reflections have no contribution to the $x_1$ axis; that is, $a_1^{(k)}=a_1^{(3)}$ and $b_1^{(k)}=b_1^{(3)},$ for $k=4,5,\ldots,4n.$ Thus, $a_1^{(4n)}<-2b_1^{(1)}$ and $2b_1^{(1)}<b_1^{(4n)}.$ Repeating the same argument for the next $4n$ reflections, we have that $a_1^{(2\cdot 4n)}<-2b_1^{4n}<-2^2b_1^{(1)}$ and $2^2b_1^{(1)}<2b_1^{(4n)}<b_1^{(2\cdot 4n)}.$ By recursion, we conclude that
$$
	a_1^{(j\cdot 4n)}<-2^jb_1^{(1)} \quad \mbox{and} \quad 2^jb_1^{(1)}<b_1^{(j\cdot 4n)},
	\quad \mbox{for} \quad j=1,2,\ldots. 
$$
Since the sequences $(a_1^{(k)})$ and $(b_1^{(k)})$ are monotone, this establishes (\ref{ab1k}).

The third and fourth reflections are respectively in the $(+y_1)$ and $(-y_1)$ directions. An argument analogous to the one just given shows that 
$$
	c_1^{(j\cdot 4n)}<-2^j c_1^{(1)} \quad \mbox{and} \quad 2^jd_1^{(1)}<d_1^{(j\cdot 4n)},
	\quad \mbox{for} \quad j=1,2,\ldots, 
$$
which implies that $c_1^{(k)}\searrow-\infty$ and $d_1^{(k)}\nearrow+\infty$  as  $k\to+\infty$. 

By the symmetry of the  construction and the fact that the directions change periodically each $4n$ times, the same argument can be applied for $t=2,\ldots,n,$  which yields (\ref{tiling})

\end{proof}

\begin{definition}[Serpentine and Masonic tilings]
The tiling $(B_k)_{k=1}^\infty$ of $\C^{n}$ by the above process is called a serpentine tiling. If to each $B_k,\, k=1,2,\ldots,$ we associate a stratification $\widetilde{B}_k,$ we call the reduced tiling $(\widetilde{B}_k)_{k=1}^\infty$ a Masonic tiling. We shall call  the closed set $E=\cup_k \widetilde B_k$ the Masonic temple constructed from the Masonic tiling ($\widetilde B_k)_{k=1}^\infty.$ 
\end{definition}

For the remainder of this paper $(B_k)_{k=1}^\infty$ will denote an arbitrary but fixed serpentine tiling of $\C^n.$ We denote by $(\beta_{i,j}^ k)_{k=1}^\infty$ an associated sequence of tilings. That is, $(\beta_{i,j}^ k)_{i, j}$ is a tiling of $B_k,$ for $k=1,2, \ldots .$ One should note that $(\widetilde B_k)_{k=1}^\infty$ does not cover $\C^{n}$. 

We now generalize, in some sense, Lemma \ref{predensity} from a block to a tiling of $\C^n$.

\begin{lemma}\label{9-12}
Let $(B_k)_{k=1}^\infty$ be a serpentine tiling of $\C^n;$  
let  $(\delta_k)_{k=1}^\infty$ be a sequence of positive numbers: 
let $\eta$ be an arbitrary regular Borel measure on $\C^n$ and let $(K_k)_{k=1}^\infty$ be an exhaustion of $\partial_0 U$ by compacta. Then, there exists a partition $(\beta_{i,j}^k)$ of $B_{k}$, and an associated Masonic tiling $(\widetilde B_k)_{k=1}^\infty,$ whose corresponding Masonic temple $E=\cup_k\widetilde B_k$ is such that, for $k=1,2,\ldots,$

(a) $diam(\beta^k_{i,j})<\delta_k$, for each brick $\beta^k_{i,j}$ of  $B_k;$

(b)	$\eta\left\{z\in\C^n\setminus E: |z|>k\right\}< \delta_k$ and hence:
$$
	\mu \left\{x\in U: h(x)\in \C^n \setminus E \textit{ and }  |h(x)|>k\right\}< \delta_k.
$$

(c) for all $p\in\partial U$ and for all $\lambda>0$, there exists an $r_{p, \lambda}>0$, such that 
$$
	\mu\big(\B(p, r)\cap(U\setminus h^{-1}(E))\big) \le 
		\lambda\cdot\mu\big(\B(p, r)\cap U\big), \quad \mbox{for all} \quad  0<r<r_{p, \lambda}.
$$
\end{lemma}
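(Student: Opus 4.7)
The plan is to construct each stratification $\widetilde B_k$ separately, so that (a), (b), and (c) all follow from combined applications of Lemmas \ref{stratification} and \ref{predensity}. To handle the two inequalities in (b) with a single estimate, I would work with the auxiliary regular Borel measure $\widehat\eta:=\eta+\mu_*$ on $\C^n$; a bound of the form $\widehat\eta(A)<\delta$ then controls both $\eta(A)$ and $\mu_*(A)=\mu(h^{-1}(A)\cap U)$ in one stroke.

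Next I assign each $B_k$ a ``mass budget'' $\epsilon_k>0$ chosen so that $\sum_{k\in\mathcal J_m}\epsilon_k<\delta_m$ for every $m$, where $\mathcal J_m:=\{k:B_k\cap\{|z|>m\}\neq\emptyset\}=\{k:R_k>m\}$ and $R_k:=\sup\{|z|:z\in B_k\}<\infty$. The concrete choice $\epsilon_k:=2^{-k-1}\cdot\min\{\delta_m:m\in\N,\,m<R_k\}$ is well defined (a minimum over a finite non-empty set) and yields $\sum_{k\in\mathcal J_m}\epsilon_k\le\delta_m/2$. Applying Lemma \ref{stratification} with the measure $\widehat\eta$, for each $k$ I obtain a partition $(\beta^k_{i,j})$ of $B_k$ with $\mathrm{diam}\,\beta^k_{i,j}<\delta_k$ and a stratification $\widetilde B_k$ satisfying $\widehat\eta\bigl((B_k\setminus\widetilde B_k)^{\mathrm o}\bigr)<\epsilon_k$. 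Summing over $k\in\mathcal J_m$ then produces (b), up to the seam set $\bigcup_k\partial B_k$; but this is a countable union of real hyperplanes, and a regular Borel measure on $\C^n$ charges only countably many hyperplanes perpendicular to each axis, so by a small generic translation of the initial brick $B_1$ every seam sits in a $\widehat\eta$-null hyperplane.

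For (c), the same stratification must also control $\mu$-density near every boundary point. At step $k$, I impose, for each $j\le k$, the condition of Lemma \ref{predensity} with $K=K_j$, density coefficient $\alpha_{k,j}:=2^{-k}$ (summable in $k$), and an interval $[r_{k,j},t_{k,j}]$ with $r_{k,j}\searrow 0$ as $k\to\infty$ for each fixed $j$. All of these constraints---small $\widehat\eta$-measure of the slabs and small $\mu$-density of their preimages near $K_j$---are realised just by taking the inserted hyperplanes close enough to the partition hyperplanes, using regularity of $\widehat\eta$ together with the positive-infimum argument in the proof of Lemma \ref{predensity}. To verify (c), fix $p\in\partial U$ and $\lambda>0$. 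If $p\notin\partial_0U$, the inequality is trivial once $r$ is small enough that $\mu(\B(p,r)\cap U)=0$. Otherwise $p\in K_{j_0}$ for some $j_0$; choose $k_0$ with $\sum_{k\ge k_0}\alpha_{k,j_0}<\lambda/2$ and take $r_{p,\lambda}$ smaller than $r_{k,j_0}$ for all $k<k_0$ and small enough that the finitely many bricks $B_k$, $k<k_0$, disjoint from a neighbourhood of $h(p)$ contribute nothing, to obtain $\mu\bigl(\B(p,r)\cap(U\setminus h^{-1}(E))\bigr)\le\lambda\cdot\mu\bigl(\B(p,r)\cap U\bigr)$ for all $0<r<r_{p,\lambda}$.

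The main obstacle is the parameter bookkeeping: choosing the budgets $\epsilon_k$, the density coefficients $\alpha_{k,j}$, and the radii $r_{k,j}$ so that all three families of constraints are satisfiable by a single stratification of each $B_k$. The key enabling observation is that in the proofs of Lemmas \ref{stratification} and \ref{predensity} every relevant quantity tends to zero as the inserted hyperplanes approach the partition hyperplanes, so any finite collection of such constraints can be met simultaneously at each stage.
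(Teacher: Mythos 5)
Your overall strategy mirrors the paper's: apply Lemmas \ref{stratification} and \ref{predensity} brick by brick with summable budgets and then sum. The $\widehat\eta=\eta+\mu_*$ device and the explicit budgets $\epsilon_k$ are fine (modulo the harmless edge case where $\{m\in\N:\,m<R_k\}$ is empty for bricks lying inside the unit ball). Your point about the seam set $\bigcup_k\partial B_k$ is a real subtlety: those faces never belong to $E$, since every $\widetilde\beta_{i,j}\subset\beta_{i,j}^{\mathrm o}$, and the paper's proof is silent about them. But note that your cure, a generic translation of $B_1$, is not available under the hypotheses: the serpentine tiling is \emph{given} in the statement (and fixed once and for all in the paper), so translating it means you prove a variant of the lemma rather than the lemma itself; within the given tiling you are only free to place the \emph{internal} partition hyperplanes on $\widehat\eta$-null hyperplanes.

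The genuine gap is in (c). You impose the conclusion of Lemma \ref{predensity} for $K=K_j$ only on a single interval $[r_{k,j},t_{k,j}]$, with $r_{k,j}\searrow 0$ in $k$ but chosen with no reference to the geometry. In the verification you must bound, for \emph{every} $0<r<r_{p,\lambda}$, each tail term $\mu\big(\B(p,r)\cap h^{-1}((B_k\setminus\widetilde B_k)^{\mathrm o})\big)$, $k\ge k_0$, by $2^{-k}\mu\big(\B(p,r)\cap U\big)$; but your imposed estimate applies only when $r\ge r_{k,j_0}$, and for $r<r_{k,j_0}$ nothing prevents $\B(p,r)$ from still meeting $h^{-1}\big((B_k\setminus\widetilde B_k)^{\mathrm o}\big)$, in which case that term is a fixed positive number while $\mu\big(\B(p,r)\cap U\big)$ may be arbitrarily small. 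Your condition ``$r_{p,\lambda}<r_{k,j_0}$ for $k<k_0$'' does not touch these $k\ge k_0$. The paper sidesteps this by demanding the estimate for every $s\in[1/(\ell+1),1/\ell]$ and every $\ell$, i.e.\ for all $s\in(0,1]$, which is achievable because only finitely many of these intervals are non-vacuous for a fixed $B_k$: $h^{-1}(B_k)$ is a compact subset of $U$, hence at positive distance from $K_k$, so for small $s$ the inequality holds trivially. This removes the lower-endpoint problem entirely. Your argument can be repaired either by doing the same, or by additionally requiring $r_{k,j}\le\mathrm{dist}\big(K_j,h^{-1}(B_k)\big)$ so that the uncovered radii are vacuous; as written, the bookkeeping does not close.
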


\begin{proof}
Let $(\epsilon_k)_{k=1}^\infty$  be an arbitrary sequence of positive numbers. To prove (a) and (b), 
let $B=B_k$, $\delta=\min \{\delta_k, 2^{-k}, \epsilon_k\}$, $K=K_k$, and $[r, t]=[1/(\ell+1),1/\ell] , k=1, 2, \ldots$.  
Since both Lemma \ref{stratification}, and Lemma \ref{predensity} have the property that every finer partition satifies the same conclusion, we may thus choose a partition  $(\beta_{i,j}^k)$ of $B_{k},$ such that, for $k=1,2,\ldots$,
\begin{equation}
	diam(\beta^k_{i,j})<\delta_k, \quad \mbox{for each brick} \quad \beta^k_{i,j} \quad \mbox{of} \quad B_k;
\end{equation}
\begin{equation}\label{epsilon}
\eta((B_{k}\setminus\widetilde B_{k})^{\mathrm{o}})<\epsilon_k,
\end{equation} 
which proves (a) and 
\begin{equation}\label{muineq}
	\mu\left(\B(p, s)\cap h^{-1}((B_k\setminus \widetilde B_k)^{\mathrm{o}})\right)\le 2^{-k}\cdot \mu\left(\B(p, s)\cap U\right); 
	\,\,\, p\in  K_k, \, \ell, \, s\in \left[\frac{1}{\ell+1}, \frac{1}{\ell}\right]. 
\end{equation}

Now, since the family $(\widetilde{B}_{k})_{k=1}^\infty$ of stratifications and the family $(\widetilde \beta_{i,j}^k)$ of all the bricks of all the $\widetilde B_{k}$ are both locally finite, we may choose a sequence $(\epsilon_k)_{k=1}^\infty$ which tends to zero sufficiently rapidly that (\ref{epsilon}) yields the desired estimates
$$
	\eta\left\{z\in\C^n\setminus E: |z|>k\right\}< \delta_k.
$$ 
The second part of (b) can be proved by noting that $\mu_*$ is a regular Borel measures on $\C^n$.

To prove (c), note that for $p\in\partial U\setminus \partial_0 U,$ this is trivial, since both sides are zero. Now for  $p\in \partial_0 U,$ there exists $k_p\in \N$ such that $p\in K_{k_p}$. Moreover, $p\in K_{k}$, for all $k \ge k_p$. 
Now suppose $\lambda>0$ and suppose $0<r<1.$
We may choose $k(\lambda)\ge k_p$ such that $2^{-(k(\lambda)-1)}<\lambda.$ Furthermore, choose $\ell_\lambda$ so large that $\B(p, r)\cap h^{-1}(B_k)=\emptyset,$ for all $k\le k(\lambda)$ and all $r<1/\ell_\lambda=:r_{p, \lambda}.$ 
Then, for all $r<r_{p, \lambda}$, by subadditivity of the measure $\mu$ and inequality (\ref{muineq}), one has
\begin{equation*}
\begin{split}
	\mu\big(\B(p, r)\cap(U\setminus h^{-1}(E))\big)&=
	\mu\left(\bigcup_{k=1}^\infty\big\{\B(p, r)\cap h^{-1}\big((B_k\setminus\widetilde B_k)^{\mathrm{o}}\big)\big\}\right)\\
	&\le \sum_{k= k(\lambda)}^\infty \mu \left( \B(p, r)\cap h^{-1}\big((B_k\setminus\widetilde B_k)^{\mathrm{o}}\big)\right)\\
	&\le \sum_{k=k(\lambda)}^\infty 2^{-k}\mu(\B(p, r)\cap U)\\
	&\le \lambda \cdot \mu(\B(p, r)\cap U),
\end{split}
\end{equation*}
which completes the proof in (c).
\end{proof}
\section{Approximation in $\C^n$}

The first approximation theorem is about approximating a sequence of holomorphic functions defined respectively  on  a sequence of tiles by an entire function. Then we shall approximate (outside of a small set) a continuous function on $\C^n$ by an entire function.

\begin{theorem}\label{masonic approximation} 
Let  $(\widetilde{B}_k)_{k=1}^\infty$ be a Masonic tiling in $\C^n,$ $(f_k)_{k=1}^\infty$ a sequence of functions, $f_k$ holomorphic on $\widetilde{B}_k$, and  $(\epsilon_k)_{k=1}^\infty$ a sequence of positive numbers. Then,   there is an entire function $g$, such that 
\begin{equation*}
|g(z)-f_k(z)|<\epsilon_k \quad \mbox{for all} \quad z\in \widetilde B_k, \quad  k=1,2,\ldots.
\end{equation*}
\end{theorem}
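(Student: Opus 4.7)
The strategy is a Mittag-Leffler style induction driven by the Oka-Weil theorem. I will build a sequence of holomorphic polynomials $(g_n)$ that simultaneously approximates $f_n$ on $\widetilde{B}_n$ and stays uniformly close to $g_{n-1}$ on an exhausting sequence of polynomially convex compacta. Locally uniform convergence on $\C^n$ then produces the required entire function $g := \lim g_n$.

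The key geometric step is the polynomial convexity, for every $n$, of the compact set
\[
K_n := A_n \cup \widetilde{B}_{n+1}, \qquad \text{where } A_n := B_1 \cup \cdots \cup B_n.
\]
By the serpentine construction of Lemma \ref{tiling}, each $A_n$ is itself a single brick, and $B_{n+1}$ is the reflection of $A_n$ across one of its faces, which lies in a real hyperplane $H$. Since the definition of stratification forces $\widetilde{B}_{n+1} \subset B_{n+1}^{\mathrm{o}}$, the set $\widetilde{B}_{n+1}$ stays a positive distance from $H$, so some hyperplane parallel to $H$ strictly separates $A_n$ from $\widetilde{B}_{n+1}$. Both $A_n$ (a brick) and $\widetilde{B}_{n+1}$ (by Lemma \ref{blocksPC}) are polynomially convex compacta, so Kallin's separation lemma yields the polynomial convexity of $K_n$.

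Next I induct. Fix positive tolerances $\delta_n \to 0$ fast enough that $\sum_{n \ge k} \delta_n < \epsilon_k$ for every $k$; for instance $\delta_n := 2^{-(n+1)} \min\{\epsilon_1, \ldots, \epsilon_n\}$ works. Set $g_0 := 0$ and $A_0 := \emptyset$. Given $g_{n-1}$, the compact sets $A_{n-1}$ and $\widetilde{B}_n$ are disjoint (since $\widetilde{B}_n \subset B_n^{\mathrm{o}}$ avoids the common face between $B_n$ and $A_{n-1}$), so the prescription $h_n := g_{n-1}$ on a neighbourhood of $A_{n-1}$ and $h_n := f_n$ on a neighbourhood of $\widetilde{B}_n$ defines a holomorphic function on a neighbourhood of $K_{n-1}$. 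Polynomial convexity of $K_{n-1}$ and the Oka-Weil theorem provide a polynomial $g_n$ with $|g_n - h_n| < \delta_n$ on $K_{n-1}$; in particular $|g_n - g_{n-1}| < \delta_n$ on $A_{n-1}$ and $|g_n - f_n| < \delta_n$ on $\widetilde{B}_n$.

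For convergence, Lemma \ref{tiling} gives $\bigcup_n A_n = \C^n$, so every compact $K \subset \C^n$ lies in some $A_N$; for $n > N$ the bound $|g_n - g_{n-1}| < \delta_n$ holds on $A_{n-1} \supset K$, making $(g_n)$ uniformly Cauchy on $K$, so $g := \lim g_n$ is entire. For $z \in \widetilde{B}_k$, telescoping together with $\widetilde{B}_k \subset A_{n-1}$ for every $n > k$ yields
\[
|g(z) - f_k(z)| \le |g_k(z) - f_k(z)| + \sum_{n > k} |g_n(z) - g_{n-1}(z)| < \sum_{n \ge k} \delta_n < \epsilon_k,
\]
as required. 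I expect the main obstacle to be precisely the polynomial convexity of $K_n$: without the strict shrinking built into the definition of a stratification, $A_n$ and $\widetilde{B}_{n+1}$ would only touch along $H$ rather than being strictly separated, and Kallin's lemma would not directly apply; the rest of the argument is then a standard Mittag-Leffler telescoping.
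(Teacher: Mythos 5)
Your proposal is correct and follows essentially the same route as the paper: induction on the serpentine order, using that $B_1\cup\cdots\cup B_n$ is a single brick and that its union with the shrunken $\widetilde B_{n+1}$ is polynomially convex via Kallin's separation lemma, then Oka--Weil plus a telescoping estimate to get the entire limit function. Your explicit remark that the strict shrinking $\widetilde B_{n+1}\subset B_{n+1}^{\mathrm{o}}$ is what provides the separating hyperplane is exactly the (implicit) justification in the paper's proof.
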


\begin{proof}
We may assume that, for each $k,$ $\sum_{j>k}\epsilon_j<\epsilon_k.$ 

Set $E_0=\emptyset$,  and $E_n=\cup_{k=1}^nB_k, n>0$. 
By definition, $E_1=B_1$ is polynomially convex. Since $E_n$, for $n>1$, is the $n$-double of $E_{n-1}$, it follows that $E_n$ is polynomially convex. We shall construct by induction a sequence $\{P_n\}$ of polynomials, such that $|P_k-f_k|<\epsilon_k/2^k$ for $k=1,2,\ldots,$ on $\widetilde B_k$ and $|P_k-P_{k-1}|<\epsilon_k/2^k$ on $E_{k-1}$.
The function $f_1$ is holomorphic on $\widetilde{B}_1$ and $\widetilde{B}_1$ is polynomially convex, so there is a holomorphic polynomial $P_1$ such that 
$$|P_1(z)-f_1(z)|<\frac{\epsilon_1}{2^1}, \quad \textit{ for all }\quad z\in\widetilde{B}_1.$$
Define 
\begin{equation*}
	g_{2}(z) = 
\left\{
	\begin{array}{ll}
		P_1(z),	&	z\in E_1=B_1,\\
		f_2(z),	&	z\in \widetilde{B}_2,	
	\end{array}
\right.
\end{equation*}
which is a holomorphic function on $B_1\cup \widetilde{B}_2$. By the Kallin Separation Lemma, $B_1\cup \widetilde{B}_2$ is polynomially convex, so there exists a holomorphic polynomial $P_2$ such that 
$$|P_2(z)-g_2(z)|<\frac{\epsilon_2}{2^2}, \quad \textit{ for all }\quad z\in B_1\cup \widetilde{B}_2.$$
Having defined $P_1, \dots, P_k$, define
\begin{equation*}
	g_{k+1}(z) = 
\left\{
	\begin{array}{ll}
		P_k(z),	&	z\in E_k,\\
		f_{k+1}(z),	&	z\in \widetilde{B}_{k+1},	
	\end{array}
\right.
\end{equation*}
which is a holomorphic function on $E_k\cup \widetilde{B}_{k+1}$. Since $E_k\cup \widetilde{B}_{k+1}$ is a polynomially convex set there exists a holomorphic polynomial $P_{k+1}$ such that 
$$|P_{k+1}(z)-g_{k+1}(z)|<\frac{\epsilon_{k+1}}{2^{k+1}}, \quad \textit{ for all }\quad z\in E_k\cup \widetilde{B}_{k+1}.$$
Equivalently, 
\begin{equation}\label{induction}
\begin{split}
|P_{k+1}(z)-P_{k}(z)|&<\frac{\epsilon_{k+1}}{2^{k+1}}, \quad \textit{ for all }\quad z\in E_{k},\\
|P_{k+1}(z)-f_{k+1}(z)|&<\frac{\epsilon_{k+1}}{2^{k+1}}, \quad \textit{ for all }\quad z\in \widetilde{B}_{k+1}.\\
\end{split}
\end{equation}

We claim that the sequence $\{P_k\}$, obtained by the above procedure, is uniformly Cauchy on compact subsets of $\C^n$. To prove this, let $K$ be a compact subset of $\C^n$ and fix $\epsilon>0$. Recall that $(B_k)_{k=1}^\infty$ is a serpentine tilling, so for sufficiently large $N$, we have $K\subset \cup_{k=1}^NB_k=E_N.$ Choose such an $N$ so large that $\epsilon_N<\epsilon.$ Let $N\leq s\leq \ell$. Then for $z\in K$ one has $z\in E_s\subset E_\ell$.  So on $K,$
$$
	|P_\ell-P_s| = \left|\sum_{j=s+1}^\ell( P_j-P_{j-1})\right| < \sum_{j=s+1}^\ell\frac{\epsilon_{j}}{2^{j}}
	<\sum_{j=s+1}^\infty\epsilon_j<\epsilon_s < \epsilon_N<\epsilon.
$$
Thus, the sequence $(P_k)_{k=1}^\infty$ is indeed uniformly Cauchy on compacta and hence converges to an entire function $g$.

There only remains to show that $g$ performs the required approximation of $(f_k)_{k=1}^\infty$ on $(\widetilde B_k)_{k=1}^\infty$. 
Fix $k.$ First,  note that $\widetilde{B}_k\subset E_{j-1}\subset E_j$ for every  $j >k$. By  (\ref{induction}) and the above computation, for $z\in\widetilde{B}_k$, one has
\begin{equation*}
\begin{split}
|g(z)-f_k(z)|&\leq |g(z)-P_k(z)|+|P_k(z)-f_k(z)|\\
&=\lim_{j\to\infty}|P_j(z)-P_k(z)|+|P_k(z)-f_k(z)|\\ 
&\leq\sum_{j=k+1}^\infty |P_j(z)-P_{j-1}(z)|+\frac{\epsilon_k}{2^k}\\
&\leq\sum_{j=k+1}^\infty \frac{\epsilon_j}{2^j}+\frac{\epsilon_k}{2^k},\,\,( \textit{by (\ref{induction}) and since } z\in E_{j} )\\
&\leq \left(\frac{1}{2^{k}}\sum_{j=k+1}^\infty \epsilon_j\right)+\frac{\epsilon_k}{2^k}\leq\frac{\epsilon_{k}}{2^k}+\frac{\epsilon_k}{2^k}\le\epsilon_k.\\
\end{split}
\end{equation*}
Therefore, 
$$|g(z)-f_k(z)|\leq \epsilon_k, \textit{ on } \widetilde{B}_k.$$
\end{proof}

\begin{lemma}\label{g-u} 
Let $v$ be a continuous function on $\C^n$, and $\delta$ a positive (decreasing) continuous function on $[0,+\infty)$. 
Then, there is a Masonic temple $E\subset \C^n,$ and an entire function $g,$ such that 
\begin{equation}\label{eta}
	\mu_*\{z\in\C^n\setminus E: |z|>r\}<\delta(r), \quad \mbox{for all} \quad r\in [0,+\infty); 
\end{equation}
\begin{equation}\label{delta}
|g(z)-v(z)|<\delta(|z|), \quad \mbox{for all} \quad  z\in E;
\end{equation}
and, for every $p\in \partial U$ and $\lambda>0$, there exists an $r_{p,\lambda}>0$ such that
\begin{equation}\label{mu}
	\mu\big(\B(p, r)\cap(U\setminus h^{-1}(E))\big) \le 
		\lambda\cdot\mu\big(\B(p, r)\cap U\big), \quad \mbox{for all} \quad  0<r<r_{p, \lambda}.
\end{equation}
\end{lemma}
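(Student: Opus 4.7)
The plan is to combine Lemma~\ref{9-12} with Theorem~\ref{masonic approximation}. Because $v$ is merely continuous, one cannot approximate it directly by an entire function on a set with non-empty interior; however, each stratification $\widetilde B_k$ is a pairwise disjoint union of sub-bricks $\widetilde\beta^k_{i,j}$ on which we may prescribe the approximating function independently. I will take this local approximation to be the constant value of $v$ at a chosen point of the sub-brick --- trivially holomorphic there --- and then glue the pieces into a single entire function via Theorem~\ref{masonic approximation}.

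Concretely, I will set $R_k:=\max_{z\in B_k}|z|$, finite since $B_k$ is compact. Using the uniform continuity of $v$ on $B_k$, I would choose positive numbers $(\delta_k)_{k=1}^\infty$ so small that (i) any two points of $B_k$ within distance $\delta_k$ give $|v(z)-v(w)|<\delta(R_k)/3$, and (ii) $\delta_k\le\delta(k+1)$. Then I apply Lemma~\ref{9-12} with $\eta:=\mu_*$, this $(\delta_k)$, and an exhaustion $(K_k)$ of $\partial_0 U$ by compacta; inspecting the proof, one may additionally shrink the internal parameters $\epsilon_k$ in (\ref{epsilon}) so that $\mu_*(\C^n\setminus E)<\delta(1)$. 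Item (c) of the lemma delivers (\ref{mu}) directly; for $r\ge 1$, with $k:=\lfloor r\rfloor$, item (b) and monotonicity of $\delta$ give $\mu_*\{|z|>r\}\setminus E\le\mu_*\{|z|>k\}\setminus E<\delta_k\le\delta(k+1)\le\delta(r)$, while for $r\in[0,1)$ we have $\mu_*\{|z|>r\}\setminus E\le\mu_*(\C^n\setminus E)<\delta(1)\le\delta(r)$, establishing (\ref{eta}).

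Finally, on each sub-brick $\widetilde\beta^k_{i,j}\subset\widetilde B_k$ I pick a point $z^k_{i,j}$ and set $f_k\equiv v(z^k_{i,j})$ there. Since the sub-bricks are pairwise disjoint closed sets, $f_k$ is locally constant on $\widetilde B_k$, hence extends to a function holomorphic in a neighborhood of $\widetilde B_k$; (i) together with item (a) of Lemma~\ref{9-12} yields $|f_k-v|<\delta(R_k)/3$ on $\widetilde B_k$. Theorem~\ref{masonic approximation} applied to $(f_k)$ with $\epsilon_k:=\delta(R_k)/3$ then produces an entire $g$ with $|g-f_k|<\delta(R_k)/3$ on $\widetilde B_k$, so for $z\in\widetilde B_k$, using $|z|\le R_k$,
\begin{equation*}
|g(z)-v(z)|\le|g(z)-f_k(z)|+|f_k(z)-v(z)|<\tfrac{2}{3}\delta(R_k)<\delta(R_k)\le\delta(|z|),
\end{equation*}
which is (\ref{delta}). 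The main difficulty is the parameter bookkeeping: the $\delta_k$ must be chosen to simultaneously match the modulus of continuity of $v$, bound the tail measure of $\C^n\setminus E$ via item (b), and control $\mu_*(\C^n\setminus E)$ near $r=0$; once these dependencies are untangled, both main tools apply directly.
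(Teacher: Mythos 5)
Your proposal is correct and follows essentially the same route as the paper: choose the scales $\delta_k$ (resp.\ the paper's $\theta_k=\min\{a_k,\delta_k\}$) to encode both the modulus of continuity of $v$ on $B_k$ and the decay of $\delta$, apply Lemma~\ref{9-12} to get the Masonic temple with properties (\ref{eta}) and (\ref{mu}), define piecewise-constant sample-point functions $f_k$ on the sub-bricks, and glue them by Theorem~\ref{masonic approximation} to obtain the entire $g$ satisfying (\ref{delta}). Your explicit treatment of the range $r\in[0,1)$ in (\ref{eta}) (by shrinking the $\epsilon_k$ inside Lemma~\ref{9-12} to control $\mu_*(\C^n\setminus E)$) is a small refinement of a point the paper's proof passes over silently; otherwise the arguments coincide.
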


\begin{proof}
Suppose a serpentine tiling $(B_k)_{k=1}^\infty$ of $\C^n$ is given. 
For $k=1,2,\ldots,$ set 
$$
	\delta_k=\min\{\delta(r_k), \delta(k+1)\}, \quad \mbox{where} \quad
	r_k=\max\left\{|z| : z\in B_1\cup\cdots\cup B_k\right\}.
$$
For each $k,$ $r_k\le r_{k+1}$ so $\delta_k\ge\delta_{k+1}$.

For fixed  $k=1, 2, \dots$, the restriction of $v$ to $B_k$ is uniformly continuous  (by compactness of $B_k$). Therefore, for each $k$ there exists $a_k>0$ such that 
\begin{equation}
|v(z)-v(w)|<\delta_k/2, \vspace{.5cm} \textit{ if } z, w\in B_k, \textit{ and }  d(z, w)<a_k.
\end{equation} 

Consider a new sequence $(\theta_k)$, where 
$$\theta_k:=\min \{ a_k, {\delta}_k\}.$$
By Lemma \ref{9-12} there exists a partition $(\beta_{i, j}^k)$ of $B_k$  such that 
$diam(\beta^k_{i,j})<\theta_k$, for each $k=1, 2, \dots$. Furthermore, there exists a stratification
$\widetilde B_k=\cup\,\widetilde\beta^k_{i,j}$ of $B_{k}$, such that, for the Masonic tiling $E=\cup_k \widetilde B_{k}$,  
$$
	\mu_*\left\{z\in\C^n\setminus E: |z|>k\right\}<\theta_k, \quad k=1, 2, \dots;
$$ 
for all $p\in \partial U$ and for all $\lambda>0$, there exists an $r_{p, \lambda}>0$, such that 
$$
	\mu\big(\B(p, r)\cap(U\setminus h^{-1}(E))\big) \le 
		\lambda\cdot\mu\big(\B(p, r)\cap U\big), \quad \mbox{for all} \quad  0<r<r_{p, \lambda}.
$$
which proves (\ref{mu}). Also, denoting by $[r]$ the greatest integer less than or equal to $r$, 
$$
\mu_*\left\{z\in\C^n\setminus E: |z|>r\right\}\le\mu_*\left\{z\in\C^n\setminus E: |z|> [r]\right\}<\theta_{[r]}\leq \delta_k\leq\delta([r]+1)\le \delta(r),
$$
which proves (\ref{eta}).

Define a holomorphic function on this Masonic temple $E=\cup_k \widetilde{B}_{k},$  associated to the continuous function $u$ on $\C^n$. Pick sample points $z^k_{i, j}\in\widetilde{\beta}^k_{i,j}\subset \widetilde{B}_k$ for each element $\widetilde B_k$ in the Masonic tiling. Define $f_k$ on $\widetilde B_{k}$ by setting 
$$f_k(z)=v(z^k_{i, j}), \quad \textit{ if }  z\in \widetilde{\beta}^k_{i,j};$$
that is, $f_k$ has the constant value $v(z^k_{i, j})$ on $\widetilde{\beta}^k_{i,j},$ for each $i, j$. It is clear that $(f_k)_{k=1}^\infty$ is a sequence of holomorphic functions such that $f_k$ is holomorphic on $ \widetilde{B}_{k}$. For this sequence of holomorphic functions and the sequence $(\delta_k)_{k=1}^\infty$, by Theorem \ref{masonic approximation}, there exists an entire function $g$ such that 
$$|g(z)-f_k(z)|<\delta_k/2\quad \textit{ for } \quad z\in \widetilde{B}_k.$$
We note that if $z\in \widetilde{\beta}^k_{i,j}$, then $|v(z^k_{i, j})-v(z)|<\delta_k/2$ because $diam(\widetilde{\beta}^k_{i,j})\leq diam (\beta^k_{i,j})<\theta_k\leq a_k$.

Suppose $z\in E$. Then there exists $k=1, 2, \dots$ and $i=i_k, j=j_k$ such that $z\in \widetilde{\beta}_{i, j}^k\subset \widetilde{B}_k\subset B_k$. One, therefore, has
\begin{equation*}
\begin{split}
|g(z)-v(z)|&\leq |g(z)-f_k(z)|+|f_k(z)-v(z)|\leq \delta_k/2+|v(z^k_{i, j})-v(z)|\\
&\leq \delta_k/2+\delta_k/2=\delta_k\leq\delta(r_k)\leq\delta(|z|),
\end{split}
\end{equation*}
where the last inequality follows from the property that $\delta$ is a decreasing function.
\end{proof}

The following approximation result is not needed in the proof of our main theorem, but is closely related to the topic and of independent interest.

\begin{theorem}\label{measurable}
In $\C^n,$  for every regular Borel measure $\eta,$ for every   Borel measurable function $\varphi,$
and for every positive decreasing continuous function $\epsilon$  on $[0,+\infty),$ there exists an entire  function $f$  and a closed set $E\subset \C^n,$ such that 
$$
	\eta\{z\in \C^n\setminus E: |z|>r\} \,\, < \,\, \epsilon(r)
$$
and
$$
	|f(z)-\varphi(z)|<\epsilon(|z|),  \quad \mbox{for all} \quad z\in E.
$$
\end{theorem}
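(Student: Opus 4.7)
The plan is to reduce Theorem \ref{measurable} to the continuous case already handled by Lemma \ref{g-u}. That lemma is stated with the pushforward $\mu_*$, but its underlying Lemma \ref{9-12} is proved for an arbitrary regular Borel measure on $\C^n$; the same proof therefore yields an analogue of Lemma \ref{g-u} with any such $\eta$ in place of $\mu_*$. The bridge from the Borel to the continuous case will be Lusin's theorem together with the Tietze extension theorem.

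First I would build a closed set $F\subset\C^n$ on which $\varphi$ is continuous, controlling the $\eta$-mass of its complement outside large balls. Decompose $\C^n$ into compact shells $A_k=\{z\in\C^n : k-1\le|z|\le k\}$, $k=1,2,\ldots$, and fix positive numbers $(\delta_k)$ with $\sum_{k>r}\delta_k<\epsilon(r)/2$ for every $r\ge 0$ (feasible since $\epsilon$ is positive and decreasing). Because $\eta(A_k)<\infty$ by regularity, Lusin's theorem yields a compact $F_k\subset A_k$ with $\varphi|_{F_k}$ continuous and $\eta(A_k\setminus F_k)<\delta_k$. Set $F=\bigcup_k F_k$. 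The set $F$ is closed, since a convergent sequence in $F$ is bounded and therefore eventually lies in finitely many closed pieces $F_k$, so a subsequence lies in a single $F_{k_0}$, and its limit lies there too. The restriction $\varphi|_F$ is continuous: if $z_n\to z$ in $F$, partition $(z_n)$ into finitely many subsequences, each lying in some $F_k$; closedness of $F_k$ forces $z\in F_k$, and continuity of $\varphi|_{F_k}$ gives $\varphi(z_n)\to\varphi(z)$ along each. By Tietze, extend $\varphi|_F$ to a continuous function $v:\C^n\to\C$.

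Next, apply the generalised version of Lemma \ref{g-u} to $v$, to the measure $\eta$, and to the tolerance $\epsilon/2$; this yields a Masonic temple $\widetilde E\subset\C^n$ and an entire function $f$ with
\[
|f(z)-v(z)|<\epsilon(|z|)/2 \ \text{for}\ z\in\widetilde E, \qquad \eta\{z\in\C^n\setminus\widetilde E : |z|>r\}<\epsilon(r)/2.
\]
Set $E:=F\cap\widetilde E$. Then $E$ is closed; for $z\in E$ one has $v(z)=\varphi(z)$, hence $|f(z)-\varphi(z)|<\epsilon(|z|)$, and by subadditivity
\[
\eta\{z\notin E : |z|>r\} \le \eta\{z\notin F : |z|>r\} + \eta\{z\notin\widetilde E : |z|>r\} < \sum_{k>r}\delta_k + \epsilon(r)/2 < \epsilon(r).
\]

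The main obstacle is the Lusin step: one must glue the shell-wise closed pieces $F_k$ into a globally closed set on which $\varphi$ is jointly continuous, with exceptional mass decaying like $\epsilon$. Once this is done, the rest is a routine synthesis of Lemma \ref{g-u} with Tietze extension.
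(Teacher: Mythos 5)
Your proposal is correct and follows essentially the same route as the paper's own proof: shell decomposition $A_k=\{k-1\le|z|\le k\}$, Lusin plus Tietze to produce a continuous $v$ agreeing with $\varphi$ on a closed set of small exceptional $\eta$-mass, then the Masonic-temple approximation of Lemma \ref{g-u} applied with $\eta$, and finally intersecting the two exceptional sets with an $\epsilon/2+\epsilon/2$ estimate. Your explicit remark that Lemma \ref{g-u} must be read with a general regular Borel measure $\eta$ (justified via Lemma \ref{9-12}) is a point the paper uses implicitly, so no gap there.
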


\begin{proof} We may assume that $\epsilon$ deceases to zero.
For each $k=1,2,\ldots,$ choose a positive number $\epsilon_k$ with $\epsilon_k<\epsilon(k+2)/2.$ We may further assume that,  
$$\epsilon_{k+1} +\epsilon_{k+2}+\cdots<\epsilon_k,
$$
for each $k.$ Define 
$$
A_k=\left\{z\in \C^n : k-1\le |z|\le k\right\} \quad ; \quad k=1, 2, \ldots. 
$$
Since $A_k$ is locally compact, it follows from  Lusin's theorem \cite[Theorem 2.24]{R}, that there is a continuous function $u_k$ on $A_k,$ and a subset (which by the regularity of $\eta$ we may take to be compact) $K_k\subset A_k,$ such that $u_k=\varphi$ on $K_k$ and $\mu(A_k\setminus K_k)<\epsilon_k.$ 
Define $u$ on  $E_1=\cup_kK_k$ by setting $u=u_k$ on each $K_k.$
Since the family of $K_k$'s is a locally finite family of disjoint compacta, the set $E_1=\cup_kK_k$ is closed and $u$ is continuous on $E_1.$  By Tietze's theorem \cite[Theorem 20.4]{R}, one may extend $u$ to a continuous function on $\C^n$, which we continue to denote by $u.$ Furthermore,
\begin{equation*}
\begin{split}
\eta\{z\in \C^n\setminus E_1: |z|>r\}		&\le \eta\left(\bigcup_{k=[r]}^\infty(A_k\setminus K_k)\right)\\									&\le\sum_{k=[r]}^\infty\epsilon_k\le\epsilon_{[r]-1} <\epsilon([r]+1)/2\le\epsilon (r)/2.
\end{split}
\end{equation*}

On the other hand, by Theorem \ref{g-u}, there exists an entire  function $f$  and a closed set (Masonic temple) $E_2\subset \C^n,$ such that 
$$
	\eta\{z\in \C^n\setminus E_2: |z|>r\} < \epsilon(r)/2
$$
and
$$
	|f(z)-u(z)|<{\epsilon(|z|)},  \quad \mbox{for all} \quad z\in E_{2}.
$$

Now for $z\in E_1\cap E_2$ one has 
$$
	|f(z)-\varphi(z)|=|f(z)-u(z)|<\epsilon(|z|).
$$
Furthermore, by subadditivity of $\eta$, for $E=E_1\cap E_2$ one has
\begin{equation*}
\begin{split}
	\eta\{z\in \C^n\setminus E: |z|>r\} & \leq  \eta\{z\in \C^n\setminus E_1: |z|>r\} +\eta\{z\in \C^n\setminus E_2: |z|>r\}\\
	&< \epsilon(r)/2+\epsilon(r)/2=\epsilon(r).
\end{split}
\end{equation*}

Therefore, $E$ is the desired closed set and $f$ is the desired entire function. 
\end{proof}	

{\bf Remark.}
For Lebesgue measure, the proof of Theorem \ref{g-u} can be somewhat simplified, taking into account Lemma 2.4 in \cite{I} which states that for an arbitary polynomially convex compact set $Y\subset \C^n$ and $\epsilon>0,$ there is a totally disconnected polynomially convex compact set $K\subset Y,$ such that $m(Y\setminus K)<\epsilon$ and also taking into account, that polynomials are dense in $C(K),$ when $K$ is a totally disconnected polynomially convex compact set \cite[Chapter 8, page 48]{AW}.
\section{Proof of Theorem \ref{main}}

Recalling that our manifold $M$ is equipped with a distance $d,$ we denote by $\B(p,r)$ the open ball of center $p\in M$ and radius $r>0.$

\begin{lemma}\label{bundleQK}
Let  $K\subset \partial_0 U$ and  $
Q\subset\partial U$ be disjoint compact sets.
Then, for each $\varepsilon>0,$ there exists $\delta>0$ and an open neighbourhood $V_\delta$ of $Q$ in $M$ contained in a $\delta$-neighbourhood 
 of $Q$ in $M$ disjoint from $K$ such that 
\[
	\mu\big(\B(p, r)\cap U\cap V_\delta\big) \le 
		\varepsilon\cdot\mu\big(\B(p, r)\cap U\big), \quad 
	\forall p\in K, \quad \forall r>0.
\]
Furthermore,  $\delta$ can be chosen so that  $\B(p,r)\cap V_\delta=\emptyset$ for all $p\in K$ and $r<\delta$.
\end{lemma}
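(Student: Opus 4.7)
The plan is to take $V_\delta$ to be the open $\delta$-neighborhood $\{x\in M:d(x,Q)<\delta\}$ of $Q$ and to choose $\delta$ small enough using two ingredients: a uniform positive lower bound for $\mu(\B(p,r_0)\cap U)$ as $p$ ranges over $K$, and the fact that $V_\delta$ decreases to $Q$, which is disjoint from $U$. Since $K,Q$ are disjoint compacta in the metric space $(M,d)$, we have $d_0:=d(K,Q)>0$; I would begin by fixing $r_0=d_0/2$ and insisting throughout that $\delta\le d_0/2$, so that $V_\delta$ is automatically disjoint from $K$.

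First I would establish that $m:=\inf_{p\in K}\mu(\B(p,r_0)\cap U)>0$. The proof is the same compactness argument used in Lemma \ref{predensity}: if the infimum vanished, a subsequence of minimizers would converge to some $p\in K\subset\partial_0 U$, and a ball $\B(p,r_0/2)$ contained in eventual balls $\B(p_j,r_0)$ would have $\mu$-measure zero, contradicting $p\in\partial_0 U$.

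Next I would choose $\delta\in(0,d_0/2]$ small enough that, additionally, $\mu(V_\delta\cap U)<\varepsilon m$. This is possible because $\bigcap_{\delta>0}V_\delta=Q$, and $Q\cap U=\emptyset$ since $Q\subset\partial U$; by local compactness of the manifold $M$, for sufficiently small $\delta_0$ the set $V_{\delta_0}$ is relatively compact and hence has finite $\mu$-measure (regularity of $\mu$), so continuity from above of $\mu$ gives $\mu(V_\delta\cap U)\searrow\mu(Q\cap U)=0$ as $\delta\searrow 0$.

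Finally I would verify the required estimate by splitting on $r$. If $r\le d_0-\delta$ then for every $p\in K$ the triangle inequality gives $\B(p,r)\cap V_\delta=\emptyset$, so the inequality is trivial; in particular, since $\delta\le d_0/2$ we have $\delta\le d_0-\delta$, which delivers the ``Furthermore'' clause for $r<\delta$. If instead $r>d_0-\delta\ge r_0$, then $\B(p,r_0)\subset\B(p,r)$ and
\[
\mu\bigl(\B(p,r)\cap U\cap V_\delta\bigr)\le\mu(V_\delta\cap U)<\varepsilon m\le\varepsilon\,\mu\bigl(\B(p,r_0)\cap U\bigr)\le\varepsilon\,\mu\bigl(\B(p,r)\cap U\bigr).
\]
The only potentially delicate step I anticipate is the continuity-from-above argument producing $\mu(V_\delta\cap U)\to 0$; this requires knowing that $V_\delta$ is eventually contained in a set of finite $\mu$-measure, which is where the combination of local compactness of $M$ with the standing assumption that $\mu$ is finite on compacta is essential. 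Everything else is standard triangle-inequality bookkeeping.
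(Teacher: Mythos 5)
Your proof is correct, and its skeleton is the same as the paper's: you set $r_0=d(K,Q)/2$, prove $m=\inf_{p\in K}\mu(\B(p,r_0)\cap U)>0$ by the identical compactness/sequence argument (using $p\in\partial_0U$), and then split on $r$ small (ball misses $V_\delta$ by the triangle inequality, which also gives the ``Furthermore'' clause) versus $r$ large (compare with $\mu(\B(p,r_0)\cap U)\ge m$). The one step you do differently is the production of a neighbourhood of $Q$ with $\mu(V_\delta\cap U)<\varepsilon m$: the paper simply invokes regularity of $\mu$ (outer regularity of $A\mapsto\mu(A\cap U)$ at the compact set $Q$, which satisfies $\mu(Q\cap U)=0$ since $Q\subset\partial U$) to pick some open $V_\delta$ inside the $\delta$-neighbourhood, whereas you take $V_\delta$ to be the metric $\delta$-neighbourhood itself and let $\delta\searrow 0$, using local compactness of $M$ plus finiteness of $\mu$ on compacta to get a finite-measure ambient set and then continuity from above, with $\bigcap_\delta V_\delta=Q$ and $Q\cap U=\emptyset$. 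This variant buys you a concrete $V_\delta$ (the full $\delta$-neighbourhood) and avoids an explicit appeal to regularity, at the cost of leaning on the compatibility of the distance $d$ with the topology of $M$ (so that $V_\delta$ is open, metric neighbourhoods of the compact $Q$ shrink into a relatively compact open set, and compactness yields $d$-convergent subsequences); the paper's own proof implicitly uses the same compatibility, so you are on equal footing there. One small attribution slip: finiteness of $\mu$ on compacta comes from the paper's definition of a Borel measure, not from regularity.
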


\begin{proof} 
Let $r_0=d(Q,  K)/2$ which is not zero, since  $Q$  and $K$ are compact. Define 
$$
\rho =\min_{q\in K}\mu\left(\B(q, r_0)\cap U\right).
$$
We claim $\rho>0$, otherwise there exists a sequence of $(p_n)\subset K$ such that it converges to a point $p\in K\subset \partial_0U$ (by compactness of $K$) and 
$$
\mu\left(\B(p_n, r_0)\cap U\right)<\frac{1}{n}.
$$
For a given $\eta>0$ one can choose $n_\eta$ large enough such that $1/n<\eta$ and $d(p_n, p)<r_0/2$ for every $n>n_\eta$. This implies that 
$$
\mu\left(\B(p, r_0/2)\cap U\right)\leq \mu\left(\B(p_n, r_0)\cap U\right)\le \eta;
$$
that is, $\mu\left(\B(p, r_0/2)\cap U\right)=0$. This contradicts our assumption that $p\in \partial_0U$, so $\rho\neq 0$. 

We now let $0<\delta<r_0$. Since the measure $\mu$ is regular, we may choose $V_\delta,$ such that $\mu\left(V_\delta\cap U\right)<\epsilon\cdot\rho$. Let $r>0$ be arbitrary. If $r<r_0$, then the claim is true. Indeed,
\[
	\mu\big(\B(p, r)\cap U\cap V_\delta\big) \le \mu\big(\B(p, r)\cap V_\delta\big)=0=
		\epsilon\cdot\mu\big(\B(p, r)\cap V_\delta\big), \quad 
	\forall p\in  K,
\]
since $\B(p, r)\cap V_\delta=\emptyset$ for every $p\in  K$; if $r>r_0$, one has
$$
\mu\big(\B(p, r)\cap U\cap V_\delta\big) \le \mu\big(U\cap V_\delta\big)\le \epsilon \cdot \rho\le \epsilon \cdot \mu\left(\B(p, r_0)\cap U\right) \le \epsilon \cdot \mu\left(\B(p, r)\cap U\right) 
$$
for every $p\in  K$. This completes the proof of the first part.

The second part of the lemma is clear. 
\end{proof}

\begin{lemma}\label{continuous}
There exists a measurable   set $F,$ with $S\subset F\subset  \partial U$ and $\nu(\partial U\setminus F)=0;$ 
there exists a function $u\in C(U)$, continuous on $U,$ such that,
for every $p\in F$ there exists $\mathcal E_p\subset U$ such that 
\begin{equation}\label{u to psi}
	u(x)\to \psi(p), \quad  \mbox{as} \quad  x\to p, \quad x\in \mathcal E_p
\end{equation} 
and for every $\lambda>0,$ there exists $r_{p, \lambda}>0,$ such that
\begin{equation}\label{mu mu epsilon}
	\mu\big( (\B(p, r)\cap U)\setminus \mathcal E_p\big)\le \lambda\cdot\mu\big( \B(p, r)\cap U\big), 
		\quad \mbox{for all} \quad r<r_{p, \lambda}.
\end{equation}
\end{lemma}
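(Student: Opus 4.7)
The plan is to reduce $\psi$ via Lusin to a sequence of continuous functions on increasing compact subsets of $\partial U$, extend each to $M$ via Tietze, and assemble a continuous $u$ on $U$ by telescoping these extensions with cutoff functions whose transition regions have $\mu$-density zero at the relevant boundary points.

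First I would apply Lusin's theorem to $\psi$ on the $\sigma$-compact space $\partial U$ with respect to the regular measure $\nu$, producing an increasing sequence of compact sets $S = K_0 \subset K_1 \subset \cdots \subset \partial U$ with each $\psi|_{K_n}$ continuous and $\nu(\partial U \setminus F) = 0$, where $F = \bigcup_n K_n$. Tietze's theorem then extends each $\psi|_{K_n}$ to a continuous $\varphi_n : M \to \C$; in particular $\varphi_n(p) = \psi(p)$ whenever $p \in K_n$, so $\varphi_n(p) = \psi(p)$ for all $n \geq m(p)$, where $m(p)$ denotes the smallest index with $p \in K_{m(p)}$.

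The main technical step is to construct, for each $n$, an open $W_n \subset M$ containing $\partial U \setminus K_n$ such that $W_n \cap U$ has $\mu$-density zero at every point of $K_n \cap \partial_0 U$, and with $W_{n+1} \subset W_n$. Writing $\partial U \setminus K_n = \bigcup_j Q_n^{(j)}$ as a countable union of compacta $Q_n^{(j)}$ disjoint from $K_n$, I would apply Lemma~\ref{bundleQK} to each pair $(K_n \cap \partial_0 U,\, Q_n^{(j)})$ with $\varepsilon = 2^{-n-j}$, obtaining an open $V_n^{(j)} \supset Q_n^{(j)}$ and $\delta_{n,j} > 0$ satisfying $\mu(\B(p,r) \cap U \cap V_n^{(j)}) \leq 2^{-n-j}\mu(\B(p,r) \cap U)$ for all $r > 0$ together with the punctured-ball property $\B(p,r) \cap V_n^{(j)} = \emptyset$ for $r < \delta_{n,j}$. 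Set $W_n = \bigcup_j V_n^{(j)}$, intersected with $W_{n-1}$ to enforce nesting. Then, for any $\lambda > 0$ and any $p \in K_n \cap \partial_0 U$, choosing $J$ with $\sum_{j>J} 2^{-n-j} < \lambda$ and $r < \min_{j \leq J}\delta_{n,j}$ forces only indices $j > J$ to contribute to $\B(p,r) \cap W_n$, giving $\mu(\B(p,r) \cap U \cap W_n) \leq \lambda \cdot \mu(\B(p,r) \cap U)$. Arranging $\delta_{n,j} \to 0$ sufficiently rapidly (in $n+j$) also makes $\{W_n\}$ locally finite on $U$. This passage from the fixed positive density bound of a single application of Lemma~\ref{bundleQK} to a genuine density-zero statement, via the punctured-ball clause, is the principal obstacle.

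Finally I would pick continuous cutoffs $\chi_n : M \to [0,1]$, $n \geq 0$, with $\chi_{-1} \equiv 0$, $\chi_n \leq \chi_{n+1}$, $\chi_n = 1$ on $M \setminus W_n$, and $\chi_n = 0$ on an open set $W_n^\circ \subset W_n$ containing $\partial U \setminus K_n$ with $\overline{W_n^\circ} \subset W_n$; then define $u(x) = \sum_{n \geq 0} (\chi_n(x) - \chi_{n-1}(x))\,\varphi_n(x)$ for $x \in U$. Local finiteness renders $u$ continuous. For $p \in F \cap \partial_0 U$ with $m = m(p)$, set $\mathcal{E}_p = U \setminus W_m$. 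When $m \geq 1$, the inclusion $p \in \partial U \setminus K_{m-1} \subset W_{m-1}^\circ$ yields a neighborhood $B \subset W_{m-1}^\circ$ of $p$ in $M$; on $B \cap (U \setminus W_m)$ (and on all of $U \setminus W_0$ when $m = 0$), monotonicity of the $\chi_n$'s forces $\chi_n = 0$ for $n < m$ and $\chi_n = 1$ for $n \geq m$, collapsing the sum to $u(x) = \varphi_m(x)$, so that $u(x) \to \varphi_m(p) = \psi(p)$ by continuity of $\varphi_m$. The bound~(\ref{mu mu epsilon}) is exactly the density-zero estimate for $W_m$ from the previous paragraph. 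For $p \in F \setminus \partial_0 U$, both sides of~(\ref{mu mu epsilon}) vanish for small $r$, so any $\mathcal{E}_p$ (e.g.\ $\emptyset$) suffices.
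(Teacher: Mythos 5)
Your overall strategy is the same as the paper's (Lusin decomposition of $\partial U\setminus S$ into compacta, Lemma~\ref{bundleQK} applied with dyadic weights $2^{-n-j}$ plus the punctured-ball radii to turn a fixed $\varepsilon$-bound into a density-zero statement by a tail-sum estimate, a stage $m(p)$ depending on the point, and the trivial choice $\mathcal E_p=\emptyset$ off $\partial_0U$); only the gluing differs (global Tietze extensions $\varphi_n$ patched by cutoffs, versus the paper's nested Tietze extensions $f_n$ on the increasing relatively closed sets $E_n\cup F_n$, which dispenses with cutoffs altogether). However, as written there are two genuine gaps. First, $S$ is only assumed closed in $\partial U$, not compact, so your ``increasing sequence of compact sets $S=K_0\subset K_1\subset\cdots$'' need not exist, and more seriously you cannot apply Lemma~\ref{bundleQK} with $K=K_n\cap\partial_0U$: that lemma genuinely needs $K$ compact (its proof uses $d(Q,K)>0$ and $\rho=\min_{q\in K}\mu(\B(q,r_0)\cap U)>0$, both of which can fail for closed noncompact $K$). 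The paper circumvents exactly this by exhausting $S\cap\partial_0U$ by compacta $S_k$ and requiring the estimates at stage $j$ only at points of the compact set $\cup_{k\le j}(S_k\cup Q_k)$, letting the stage $n(p)$ absorb the noncompactness; your scheme can be repaired the same way, but the repair is not in your text.

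Second, the cutoffs you postulate cannot exist: you ask for $\chi_n:M\to[0,1]$ continuous with $\chi_n=1$ on $M\setminus W_n$ and $\chi_n=0$ on an open $W_n^{\mathrm{o}}\supset\partial U\setminus K_n$ with $\overline{W_n^{\mathrm{o}}}\subset W_n$. Since each $V_n^{(j)}$ produced by Lemma~\ref{bundleQK} is disjoint from $K_n\cap\partial_0U$, any point $p\in K_n\cap\partial_0U$ lying in the closure of $\partial U\setminus K_n$ (the typical situation, e.g.\ an endpoint of an arc $K_n\subset\partial\D$) satisfies $p\notin W_n$, forcing $\chi_n(p)=1$, while $p$ is a limit of points of $W_n^{\mathrm{o}}$ where $\chi_n=0$; in particular $\overline{W_n^{\mathrm{o}}}\not\subset W_n$ and no such continuous $\chi_n$ on $M$ exists. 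The statement only requires $u\in C(U)$, so the fix is to demand the cutoff properties and continuity on $U$ alone ($\chi_n=0$ on $W_n^{\mathrm{o}}\cap U$, $\chi_n=1$ on $U\setminus W_n$): with your choice $\delta_{n,j}\to0$ the closure of $W_n^{\mathrm{o}}$ \emph{inside} $U$ stays in $W_n$, so $U$-cutoffs do exist and your collapse argument, which is purely pointwise near $p$, goes through. You need to state and verify this weakened version (or avoid cutoffs as the paper does); as written, the step fails.
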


\begin{proof} 
 Put $S_0=S\cap \partial_0 U$ and $S^0=S\setminus \partial_0U,$ so $S=S^0\dot\cup S_0.$ We can write $S_0=\cup_{n=1}^\infty S_n$ and $S^0=\cup_{n=1}^\infty S^n,$ where $(S_n)_{n=1}^\infty$ and $(S^n)_{n=1}^\infty$ are increasing sequences of compact sets. 

First we assume that $\nu(\partial_0 U\setminus S_0)<+\infty$. 
By Lusin's theorem (see \cite{R} and \cite[Theorem 2]{W}), there exists a compact set $Q_1$  in $\partial_0 U\setminus S_0$ such that $\nu(\partial_0 U\setminus S_0\setminus Q_1)<2^{-1}$ and the restriction of $\psi$ to $Q_1$ is continuous. Now, again by Lusin's theorem, we can find a compact set $Q_2$ in $ \partial_0 U\setminus S_0\setminus Q_1$ with $\nu\big((\partial_0 U\setminus S_0\setminus Q_1)\setminus Q_2\big)<2^{-2}$ so that the restriction of $\psi$ to 
$Q_2$ is continuous. Since, $Q_1$ and $Q_2$ are disjoint compact sets, the restriction of $\psi$ to $Q_1\,\dot\cup \,Q_2$ is continuous. By induction we can construct a sequence of compact sets $(Q_n)_{n=1}^\infty$ so that $Q_{n}\subset {\partial_0 U}\setminus S_0\setminus \cup_{j=1}^{n-1} Q_j$, $\nu(\partial_0 U\setminus S_0\setminus \cup_{j=1}^{n} Q_j)<2^{-n}$ and 
the restriction of $\psi$ to $Q_1\,\dot\cup\cdots\dot\cup\, Q_n$ is continuous, for $n=1,2,3,\ldots$. 
It is obvious that $(Q_n)_{n=1}^\infty$ is a family of pairwise disjoint compact sets and setting
\begin{equation}\label{Q}
	Q=Q_1\cup Q_2\cup\cdots,
\end{equation}
we have that $Q\subset \partial_0 U$ and $\nu\big(({\partial_0 U}\setminus S_0)\setminus Q\big)=0.$

If $\nu({\partial_0 U}\setminus S_0)=+\infty,$ then by the $\sigma$-finiteness of the measure $\nu$, there exists a pairwise disjoint sequence of measurable sets $(R_l)_{l=1}^\infty$ with $\nu(R_l)<+\infty$ and 
$\partial_0 U\setminus S_0=\dot{\bigcup}_{l=1}^\infty R_l$. By the previous argument applied to the function $\psi$ restricted to the set $R_l$ we can find a pairwise disjoint sequence of compact sets $(Q_{n, l})_{n=1}^\infty$ of $R_l,$ so that the restriction of $\psi$  to each   $Q_{n, l}$ is continuous and $\nu(R_l\setminus \dot\cup_{n=1}^\infty Q_{n, l})=0.$ We may arrange the countable family of pairwise disjoint compact sets 
$$\{Q_{n,\ell}: n=1,2,\ldots; \ell=1,2,\ldots,\}$$
in a sequence, so that, whether or not $\nu({\partial_0 U}\setminus S_0)=+\infty,$ we may write $Q$ in the form (\ref{Q}), where the $Q_n$'s are disjoint and the restriction of $\psi$ to each $Q_n$ is continuous and $\nu\big(({\partial_0 U}\setminus S_0)\setminus Q\big)=0.$

Similarly, there is a set $A\subset (\partial U\setminus\partial_0U)\setminus S^0$ of the form $A=A_1\cup A_2\cdots,$ where $(A_n)_{n=1}^\infty$ is a sequence of disjoint compact sets, $\psi$ restricted to each $A_n$ is continuous  and
$$
	\nu\big(((\partial U\setminus\partial_0U)\setminus S ^0)\setminus A\big)=0.
$$

Setting $F=A\cup S\cup Q,$ we have $S\subset F\subset \partial U$ and 
$\nu\big(\partial U\setminus F)=0.$   Note that the restriction of $\psi$ to each $F_n=A_1\cup\cdots\cup A_n\cup S\cup Q_1\cup\cdots\cup Q_n$ is continuous, because the restriction to the closed set $S$ is continuous, the restriction to the compact set $A_1\cup\cdots\cup A_n\cup Q_1\cup\cdots\cup Q_n$ is continuous, and this compact set is disjoint from the closed set $S,$ so the restriction to the union, which is $F_n,$ of the closed set and the compact set,  is also continuous.

We now begin to extend the function $\psi$. For this we shall construct inductively an increasing sequence $(E_n)_{n=1}^\infty$ of subsets of $U$ and a sequence of functions $(f_n)_{n=1}^\infty$.  

By Lemma \ref{bundleQK}, for $l=2,3,\ldots,$ there is an open neighbourhood $V_{1,l}$ of $Q_l$ contained in a $\delta_{1,l}$-neighbourhood of $Q_l$ in $M$ such that
\begin{equation}
\label{neigVj}
\begin{aligned}
	\mu\big(\B(p, r)\cap U\cap V_{1,l}\big) &\le \frac{1}{2^l}\cdot \mu\big(\B(p, r)\cap U\big), \quad 
	\forall p\in S_1\cup Q_1,\\
\mu\big(\B(p, r)\cap U\cap V_{1,l}\big)&=0, \quad \forall p\in S_1\cup Q_1, \quad \forall r<\delta_{1,l}.
\end{aligned}
\end{equation}
Without loss of generality we can assume that $\delta_{1,l}<1$ for $l=2,3,\ldots .$ Set
\[
E_1=U\setminus \cup_{l=2}^\infty V_{1,l}.
\]
Since $F_1$ is closed in $E_1\cup F_1,$ by the Tietze extension theorem we can extend the function $\psi_1$ to a continuous function $f_1$ on $E_1\cup F_1.$

Set $E_0=\emptyset$ and assume that for $j=1,\ldots,n,$ we have fixed  $E_j,$ $\delta_{j,l},$ for $l\ge j+1,$ and functions $f_j$ continuous on $E_j\cup F_j,$
$$
	f_j(x) = 
\left\{
	\begin{array}{ll}
		f_{j-1}(x),	&	x\in E_{j-1},\\
		\psi(x),	&	x\in F_j,	
	\end{array}
\right.
$$
where $E_j=U\setminus \cup_{l=j+1}^\infty V_{j,l}$ with $V_{j,l}$ being an open neighbourhood of $Q_l$ contained in a $\delta_{j,l}$-neighbourhood  of $Q_l$ in $M\setminus E_j$ such that
\begin{equation}
\label{neighbours}
\begin{aligned}
	\mu\big(\B(p, r)\cap U\cap V_{j, l}\big)&\le \frac{1}{2^l}\cdot \mu\big(\B(p, r)\cap U\big), \quad 
	\forall p\in \cup_{k=1}^j(S_k\cup Q_k), \\
	\mu\big(\B(p, r)\cap U\cap V_{j, l}\big)&=0, \quad \forall p\in \cup_{k=1}^j(S_k\cup Q_k), \quad \forall r<\delta_{j,l},
\end{aligned}
\end{equation}
$\delta_{j,l}<1/j,$   for $j=1,\ldots,n$ and $l=j+1,j+2,\ldots$. For the step $n+1$, using Lemma \ref{bundleQK} again we have that for every natural number $l>n+1$  there is an open neighbourhood $V_{n+1,l}$ of $Q_l$ contained in a $\delta_{n+1,l}$-neighbourhood of $Q_l$ in $M\setminus E_n$ such that
\begin{equation*}
\begin{aligned}
	\mu\big(\B(p, r)\cap U\cap V_{n+1,l}\big) &< \frac{1}{2^l}\cdot \mu\big(\B(p, r)\cap U\big), \quad 
	\forall p\in \cup_{k=1}^{n+1}(S_k\cup Q_k), \\
	 \mu\big(\B(p, r)\cap U\cap V_{n+1,l}\big)&=0, \quad \forall p\in \cup_{k=1}^{n+1}(S_k\cup Q_k), \quad \forall r<\delta_{n+1,l}.
\end{aligned}
 \end{equation*}
Without loss of generality we can assume that $\delta_{n+1,l}<1/(n+1).$ Set
\[
E_{n+1}=U\setminus \cup_{l=n+2}^\infty V_{n+1,l}.
\]

Note that $E_n\cup F_{n+1}$ is relatively closed in $E_{n+1}\cup F_{n+1}$. Furthermore, the function $f_{n+1}$ defined on  $E_n\cup F_{n+1}$ as $f_{n+1}=f_{n}$ on $E_n\cup F_n$ and $f_{n+1}=\psi_n$ on $Q_{n+1}$ is continuous, since these two closed sets $E_n\cup F_n$ and  $Q_{n+1}$ are disjoint. Therefore, by the Tietze extension theorem we can extend the function $f_{n+1}$ to a continuous function on $E_{n+1}\cup F_{n+1}$ that we denote in the same way. 

Note that $\cup_{n=1}^\infty E_n=U$, since  $\delta_{n+1,l}<1/(n+1).$ Then, the function $u,$ defined on $U$ as $u(x)=f_n(x)$ if $x\in E_n,$ is continuous on $U.$

Finally, we show  the limit property.  
Fix $p\in F.$  If $p\in \partial U\setminus \partial_0 U,$ we set $\mathcal E_p=\emptyset,$ so (\ref{u to psi}) and (\ref{mu mu epsilon}) are trivial because
$$
	\mu\big(\B(p,r)\cap (U\setminus \mathcal E_p)\big)=\mu\big(\B(p,r)\cap U\big)=0, \quad \mbox{ for all }  r>0.
$$ 
If $p\in F\cap \partial_0U,$ then,
either $p\in S$ and there is some $n(S),$ such that $p\in S_n,$ for all $n\ge n(S),$ or $p\in Q,$ in which case  $p\in Q_n,$ for some $n=n(Q).$ In the first case, set $n(p)=n(S);$ in the second case, set $n(p)=n(Q)$ and in either case set $\mathcal E_p=E_{n(p)}.$

We have that $p\in F_{n(p)}, \, u=f_{n(p)}$ on $E_{n(p)}=\mathcal E_p,$ $f_n$ is continuous on $E_n\cup F_n$ and $f_{n(p)}=\psi_{n(p)}$ on $F_{n(p)}.$  Therefore, $u(x)\to \psi(p),$ as $x\to p$ in $\mathcal E_p=E_{n(p)}.$ 

For fixed $\lambda>0,$ choose $m>n(p)$ so large that $1/2^m<\lambda$ and set 
$$r_{p,\lambda}=\min\left\{\delta_{n(p),\ell} : n(p)<\ell\le m\right\}.$$
Then, for $r<r_{p,\lambda},$ one has
\begin{equation*}
\begin{aligned}
	\mu\big((\B(p,r)\cap U)\setminus\mathcal E_p\big) &=
	\mu\big(\B(p,r)\cap U\cap \cup_{\ell=n(p)+1}^\infty V_{n(p), \ell}\big)\le\\
	\sum_{\ell=n(p)+1}^\infty \mu\big(\B(p,r)\cap U\cap  V_{n(p),\ell}\big) &=
	\sum_{\ell=m+1}^\infty \mu\big(\B(p,r)\cap U\cap  V_{n(p),\ell}\big)\le\\
	\sum_{\ell=m+1}^\infty\frac{1}{2^\ell}\mu\big(\B(p,r)\cap U\big)&=
	\frac{1}{2^m}\mu\big(\B(p,r)\cap U\big)<\lambda\cdot\mu\big(\B(p,r)\cap U\big).
\end{aligned}
 \end{equation*}
\end{proof}
Now, we are in a position to give a proof of Theorem \ref{main}.
\noindent

\bigskip
\noindent
{\bf Proof of Theorem 2.} 

\smallskip
By Lemma \ref{continuous} there exists a measurable set $F$, with 
$$S\subset F\subset \partial U \quad \textit{ and }\quad \nu(\partial U\setminus F)=0,$$
a  continuous function $u$ on $U$  such that for every $p\in F$, there exist a set $\mathcal E_p\subset U$ such that 
\begin{equation}\label{limit1}
u(x)\rightarrow \psi(p), \quad \mbox{ as } x \rightarrow p \quad \mbox{with} \quad x\in \mathcal E_p.
\end{equation}
Furthermore, for $\lambda>0,$  there is an $r_{p,\lambda}>0,$ such that
\begin{equation}\label{E_1}
	\mu\big( (\B(p, r)\cap U)\setminus \mathcal  E_p\big)\le \lambda\cdot\mu\big( \B(p, r)\cap U\big), 
		\quad \mbox{for all} \quad r<r_{p,\lambda}.
\end{equation}

Now let $h:U\to \C^n$ be a proper embedding of the Stein domain $U$ into some complex Euclidean space $\C^n,$ and consider the continuous function $v=u\circ h^{-1},$ defined on the closed subset $h(U)\subset\C^n$. We extend $v$ continuously to all of $\C^n.$
Note that, for $q\in \partial U$, if $x\rightarrow q$, $x\in U$, then $h(x)\rightarrow \infty$.

Let $\delta$ be a positive (decreasing) continuous function on $[0, +\infty),$ such that $\delta(r)\searrow 0$ as $r\nearrow +\infty$. Let 
$(\widetilde B_k)_{k=1}^\infty$ be a stratification of $( B_k)_{k=1}^\infty,$ such that the Masonic temple $E_{2}=\cup_k  \widetilde B_k$ satisfies Lemma \ref{g-u}.
Therefore, by Lemma \ref{g-u}, the closed set $E_{2}\subset \C^n$ may be constructed such that, setting $\mathcal E_2=h^{-1}(E_2),$
for every $p\in\partial U$ and, for every $\lambda>0,$ there is an $r^\prime_{p,  \lambda}>0$, such that
\begin{equation}\label{E_2}
	\mu\big( (\B(p,r)\cap U)\setminus \mathcal E_2\big)\le \lambda\cdot\mu\big( \B(p,r)\cap U\big), 
		\quad \mbox{for all} \quad r<r^\prime_{p, \lambda}.
\end{equation}
and there exists an entire function $g$, such that
$$|g(z)-v(z)|<\delta (|z|), \quad \textit{ for all } z\in E_{2}.$$
Note that for $z=h(x)$, where $x$ is in $\mathcal E_{2}\subset U$, the above can be written as 
$$|g(h(x))-u(x)|<\delta (|h(x)|).$$

Define $f=g\circ h$ which is a holomorphic function on $U$. Furthermore, for $p\in F$ and $x\in \mathcal E_{2}$ arbitrary, one has
\begin{equation*}
\begin{split}
|f(x)-\psi(p)|&=|(g \circ h)(x)-\psi(p)|\\
&\leq |g ( h(x))-u(x)|+|u(x)-\psi(p)|\\
&\leq \delta( |h(x)|)+|u(x)-\psi(p)|.\\
\end{split}
\end{equation*}
The above (and the properties of the  functions $h$ and $\delta$) implies that, for $p\in F,$ 
\begin{equation}\label{limit2}
	f(x)\to \psi(p) \quad \mbox{as} \quad x\to p,  \quad x\in \mathcal E_2.
\end{equation}

We now let $\mathcal F_p=\mathcal E_p\cap \mathcal E_2$, for $p\in F$. Then $\mathcal F_p$ is closed, furthermore, 
$$f(x)\rightarrow \psi(p) \quad \mbox{as} \quad x\rightarrow p,  \quad x\in\mathcal F_p.$$ 
by Equation (\ref{limit2}). Finally, for a given $\lambda>0$, by invoking (\ref{E_1}) and (\ref{E_2}), one has
\begin{equation*}
\begin{split}
\mu\big( (\B(p,r)\cap U)\setminus \mathcal F_p\big)&\le
	\mu\big( (\B(p,r)\cap U)\setminus \mathcal E_2\big)+
	\mu\big( (\B(p,r)\cap U)\setminus \mathcal E_p\big)\\
	&\le \lambda \cdot \mu\big( \B(p,r)\cap U\big)+\lambda \cdot \mu\big( \B(p,r)\cap U\big)\\	
	&\le 2\,\lambda \cdot \mu\big( \B(p,r)\cap U\big)\\
\end{split}
\end{equation*}		
for all $0<r<\min \{r_{p, \lambda}, r'_{p, \lambda} \}$.

This concludes the proof of Theorem 2.
\medskip

{\it Acknowledgement.}
The junior author would like to thank the senior author for introducing the problem and for his generous support. We thank the authors of \cite{FG3}, our Lemma \ref{bundleQK} is inspired by a similar result from their manuscript. We also thank Alexander Izzo for drawing our attention to his lovely paper \cite{I}, which, as we remarked, makes it possible to simplify our proof of Theorem \ref{g-u} in the case of Lebesgue measure. We thank Anush Tserunyan for stimulating discussions regarding descriptive set theory.


\end{document}